\numberwithin{equation}{section}
\newtheorem{theorem}{Theorem}[section]
\newtheorem{definition}[theorem]{Definition}
\newtheorem{lemma}[theorem]{Lemma}
\newtheorem{proposition}[theorem]{Proposition}
\theoremstyle{remark}
\newtheorem{remark}[theorem]{Remark}
\newtheorem{example}[theorem]{Example}
\newcommand{\R}{\mathds{R}}
\newcommand{\N}{\mathds{N}}
\newcommand{\X}{\mathcal{X}}
\newcommand{\converges}[1]{ \overset{#1}{\longrightarrow}} 
\newcommand{\M}{\mathcal{M}}
\newcommand{\cut}{Cut}
\newcommand{\x}{\mathbf{x}}
\newcommand{\bnu}{\bm{\nu}}
\newcommand{\veps}{\varepsilon}
\DeclareMathOperator{\dist}{dist}
\DeclareMathOperator{\divergence}{div}
\DeclareMathOperator{\Lip}{Lip}
\definecolor{mygreen}{rgb}{0.1,0.75,0.2}
\newcommand{\nc}{\normalcolor}
\title{ Variational limits of $k$-NN graph-based functionals on data clouds}
\author{ Nicol\'as Garc\'ia Trillos}
\newcommand{\Addresses}{{
  \bigskip
  \footnotesize
  \textsc{Division of Applied Mathematics, Brown University,
    Providence, RI, 02912, USA. }\par\nopagebreak  \textit{Email:} \texttt{nicolas\_garcia\_trillos@brown.edu}
  }}
\begin{document}
\maketitle

\begin{abstract}
This paper studies the large sample asymptotics of data analysis procedures based on the optimization of functionals defined on $k$-NN graphs on point clouds. The paper is framed in the context of minimization of balanced cut functionals, but our techniques, ideas and results can be adapted to other functionals of relevance. We rigorously show that provided the number of neighbors in the graph  $k:=k_n$ scales with the number of points in the cloud as $n \gg k_n \gg \log(n)$, then with probability one, the solution to the graph cut optimization problem converges towards the solution of an analogue variational problem at the continuum level.
\end{abstract}

\providecommand{\keywords}[1]{\textbf{\textit{Keywords---}} #1}
\keywords{k-NN graph, discrete to continuum limit, Gamma-convergence, graph cut, Cheeger cut, spectral clustering.}

\section{Introduction}

This paper studies the large sample asymptotics of data analysis procedures based on the optimization of functionals defined on graphs; the procedures of interest include graph-based methods for clustering, classification, and semi-supervised learning. The set of vertices of the graph is a random data set $X_n:=\{ \x_1, \dots, \x_n \}$  while the edges capture the level of similarity among points. In this work our focus is on $k$-NN graphs, where one puts an edge between a pair of points whenever one of them is among the $k$-nearest neighbors of the other; we will assume from here on that $X_n$ is a subset of Euclidean space.  Our main results rigurously show that when $k$ scales like $\log(n) \ll k \ll n$, then the solutions of the optimization problems of interest at the graph level are consistent and converge towards the solutions of analogue variational problems at the continuum level.  Spectral clustering, total variation clustering, diffusion maps, and $p$-Laplacian regularization, are all examples of graph-based data analysis procedures with a variational flavor (see \cite{bluv13,coifman1,HeinBuhl,HeinSetz,RanHein,ShiMalik,tutorial,Ramdas}), and the results and ideas in this paper can be applied to analyze their large sample limit in this $k$-NN setting. For expository purposes we focus on the concrete example of minimizing balance graph cuts as described below.

To get a flavor of our main results, consider a set of random points uniformly distributed on the region $D$ as depicted in Figure \ref{fig3}. A $k$-NN graph for a certain choice of $k$ is then obtained as shown in Figure \ref{fig4}. We introduce a functional on partitions $\{A, A^c \}$ of $X_n$ by 
 \begin{equation}
\cut_{n}(A, A^c) := \frac{\sum_{\x_i \in A}\sum_{\x_j \not \in A} w_{ij}  }{ \min \{ \lvert A \rvert, \lvert A^c \rvert \} } , \quad A \subseteq X_n.
\label{CheegerCut}   
\end{equation}
 The numerator favors low interaction between the two sets in a partition, while the denominator forces it to be balanced in terms of size. It is thus natural to consider the optimization problem
 \begin{equation*}
   \min_{A \subseteq X_n} \cut_{n}(A, A^c)   
  \end{equation*}
as a sensible approach for data clustering;  \eqref{CheegerCut} is known as the \textit{Cheeger cut} functional.  In Figure \ref{fig5} we illustrate the minimizer of \ref{CheegerCut} for the graph in Figure \ref{fig4}. We observe the close resemblance between the discrete minimizer in Figure \ref{fig5} and the partition of the region $D$ in Figure \ref{fig6} which can be described as a solution to a variational problem at the continuum level of the form
\[ \min_{A\subset D} \cut(A,A^c), \] 
where the cut functional $\cut(A,A^c)$ is defined (at least for $A$ with smooth boundary) as
\[\cut(A,A):= \frac{\int_{ \partial A \cap D } dS  }{\min \{ |A|, |A^c| \}}. \]
\begin{figure}[!htb]
	\minipage{0.4\textwidth}
	\includegraphics[ trim=5cm 9cm 0cm 7cm, clip=true ,width=1.5\linewidth]{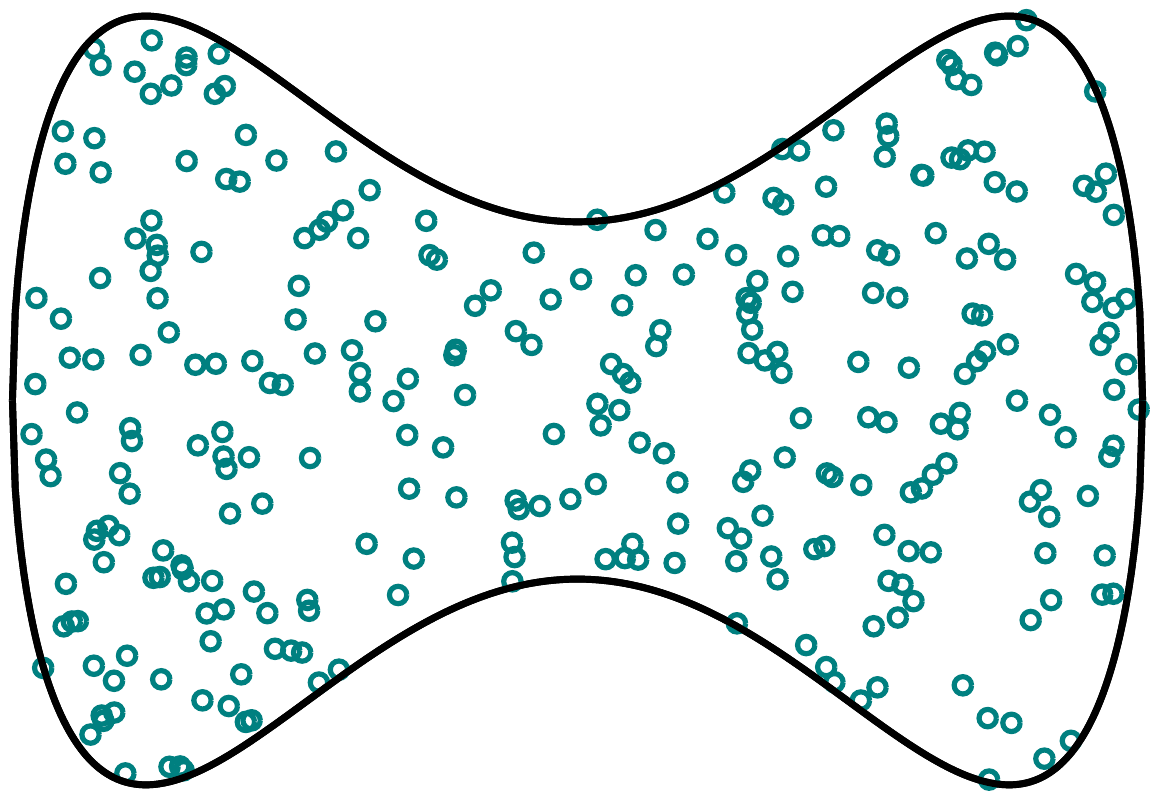}
	\caption{A sample of $n=120$ points.}\label{fig3}
	\endminipage \hfill
	\minipage{0.4\textwidth}
	\includegraphics[trim=5cm 9cm 0cm 7cm, clip=true,width=1.5\linewidth]{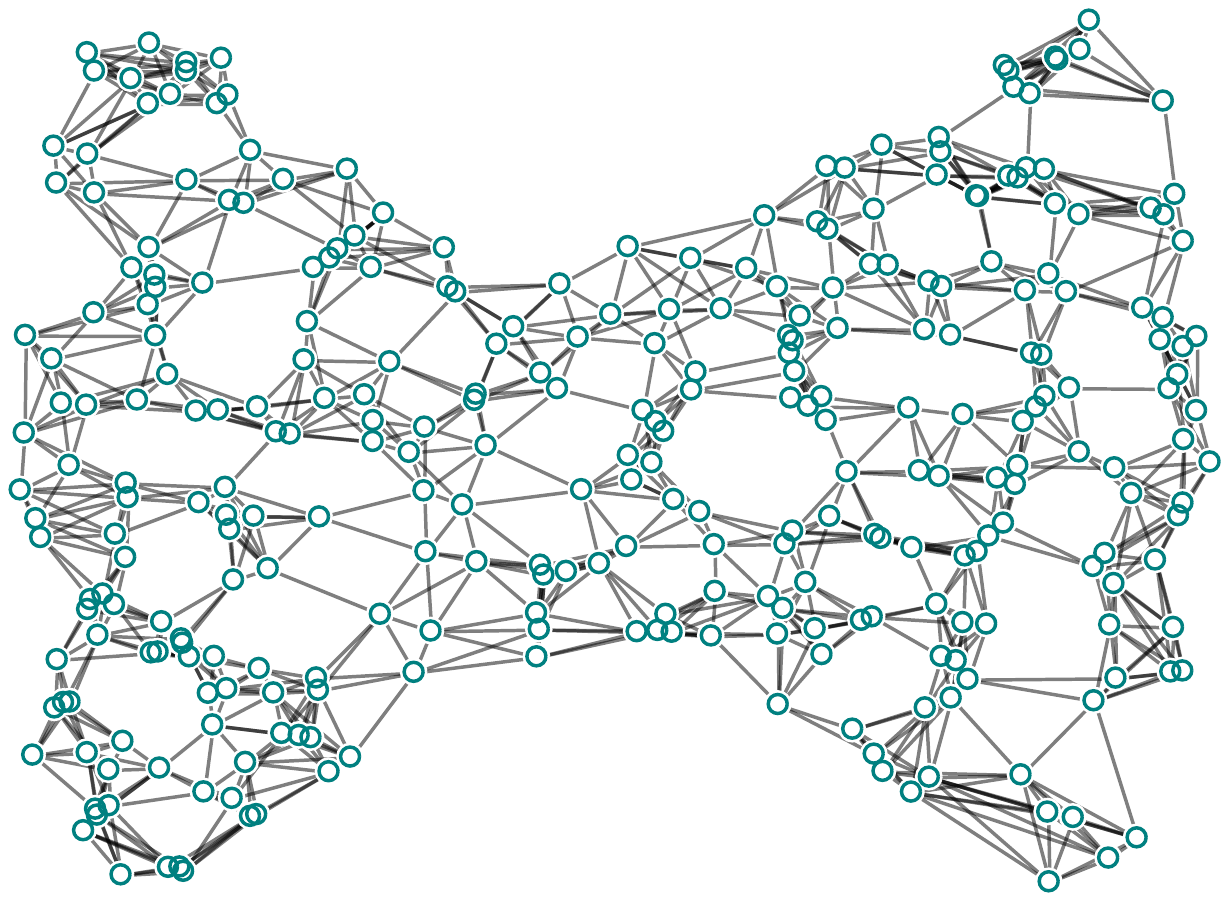}
	\caption{Geometric graph with $k=6$.}\label{fig4}
	\endminipage
\end{figure}
\begin{figure}[!htb]
	\minipage{0.4\textwidth}
	\includegraphics[trim=5cm 9cm 0cm 7cm, width=1.5\linewidth]{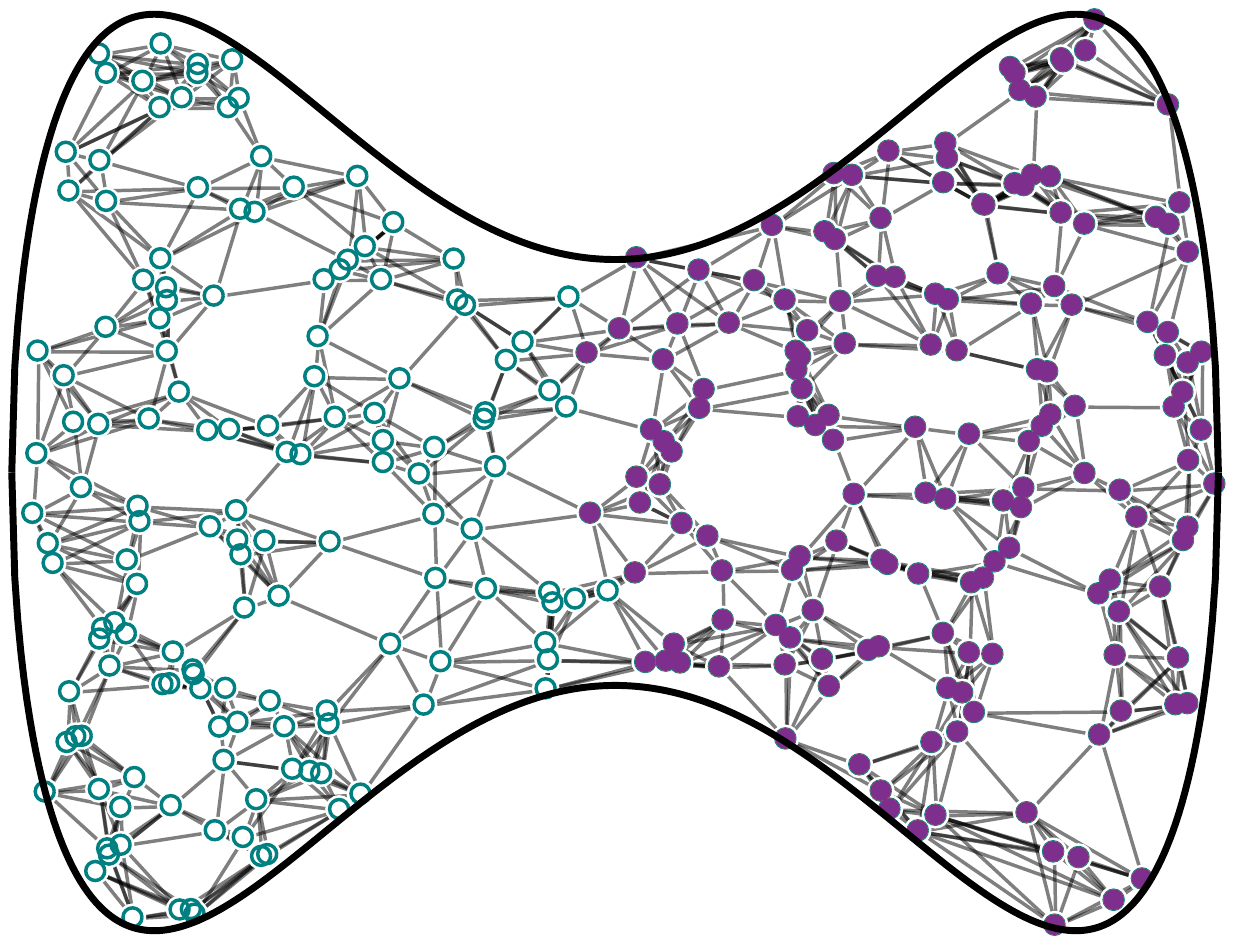}
	\caption{Minimizer of Cheeger cut.}\label{fig5}
	\endminipage\hfill
	\minipage{0.4\textwidth}
	\includegraphics[ trim=3cm 1.5cm 0cm 0cm clip=true,width=1.3\linewidth]{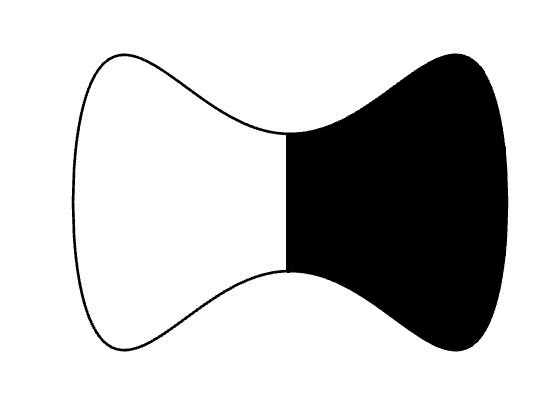}
	\caption{Minimizer continuous problem}\label{fig6}
	\endminipage\hfill
\end{figure}
The main theorem of this paper is a rigorous mathematical statement of the previous observation. We show that with probability one, and in a very precise sense, the solution to \eqref{CheegerCut} converges as $n \rightarrow \infty$ towards the solution to a continuum variational problem (see Theorem \ref{main0} below), provided that $k$ (the number of neighbors in the definition of the graph) scales like 
\[ \log(n) \ll k \ll n .\]
Phrased in the language of data clustering, our results establish the statistical consistency of a series of clustering procedures based on minimization of graph cuts when these come from $k$-NN graphs. Moreover, our results show that (at least in terms of scaling with $n$), the condition on $k$ needed to establish the consistency is dimension free.
\begin{figure}[!htb]
	\minipage{0.4\textwidth}
	\includegraphics[ trim=6.5cm 10cm 0cm 7cm, clip=true ,width=1.7\linewidth]{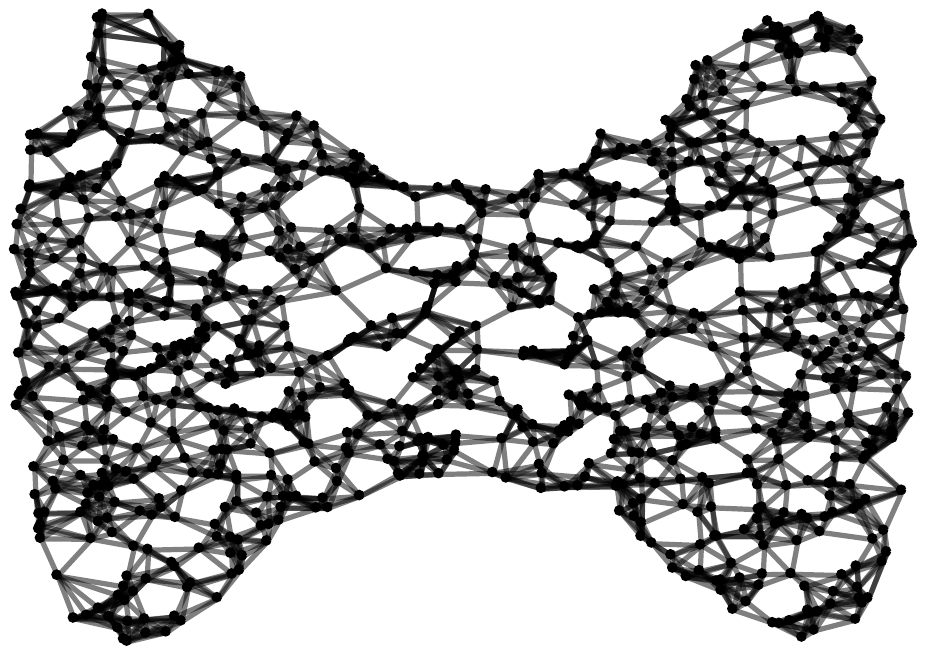}
	\endminipage \hfill
	\minipage{0.4\textwidth}
	\includegraphics[trim=6.5cm 10cm 0cm 7cm, clip=true,width=1.7\linewidth]{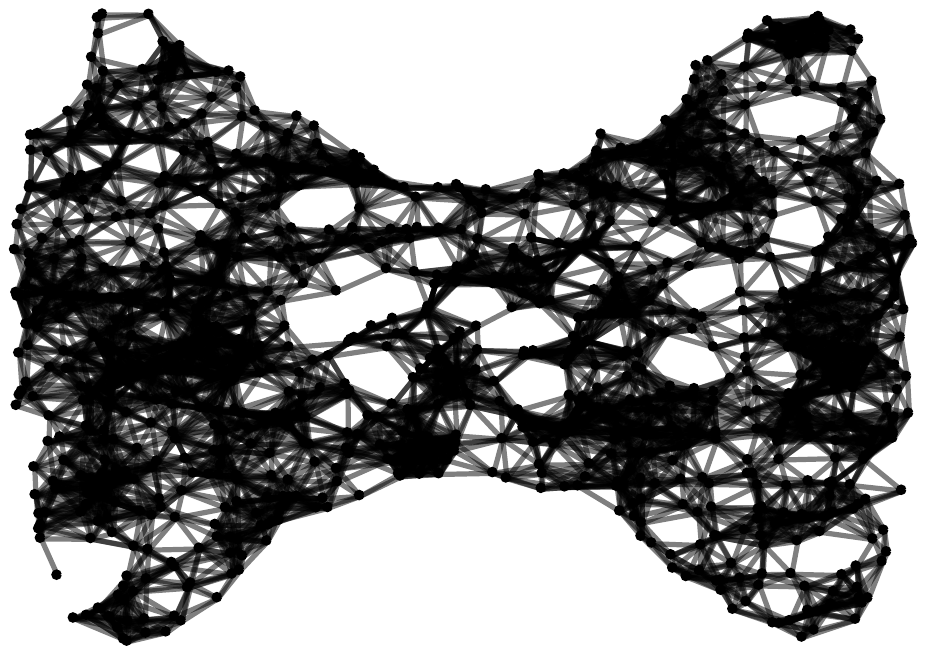}
	\endminipage
	\caption{The value of $\veps$ for the $\veps$-graph was chosen so that the graph was connected. We notice that the resulting graph exhibits less regularity than its $k$-NN counterpart.}
	\label{kNNepsilon}
\end{figure} 
  
To the best of our knowledge this work is the first one to rigorously address the stability of \textit{variational} problems on $k$-NN graphs such as the one defined in \ref{CheegerCut} in the large sample limit. Most of the theoretical works found in the literature addressing similar questions assume an $\veps$-\textit{graph} construction on the data set (i.e. there is an edge between two points if they are within distance $\veps$ of each other); an exception to this is the work \cite{THJ} where \textit{pointwise} convergence of graph Laplacians on $k$-NN graphs (among other constructions) is analyzed. We find the absence of theoretical results in the $k$-NN setting to be a strong motivation for our work given their frequent use by practitioners due to their nicer regularity properties and their robustness to data dimensionality; see \cite{tutorial} for a more complete discussion on this matter. Notice that $k$-NN graphs can be constructed completely from ordinal information about the data points and in particular exact values of interpoint distances are not needed (this is a property that adds to the robustness of the $k$-NN construction). Figure \ref{kNNepsilon} illustrates the difference between $\veps$-graphs and $k$-NN graphs. 

In this paper we follow the same line of thought described in \cite{TSvBLX_jmlr}, and in particular, reduce our analysis to obtaining the $\Gamma$-limit (see definitions in section \ref{Prelim}) of a rescaled version of the energy
\[ \sum_{\x_i \sim_k \x_j } \lvert  u(\x_i) - u(\x_j) \rvert,  \]
defined for functions $u_n : X_n \rightarrow \R$; this is the functional that appears in the numerator in \eqref{CheegerCut} (restricting to $u= \mathds{1}_A$), we will denote it by $GTV$ and refer to it as the \textit{graph total variation}, see \eqref{GraphTV} below for its 
precise definition.  The $\Gamma$-limit of $GTV$ is shown to be a weighted (local) total variation functional at the continuum level.  The notion of $\Gamma$-convergence provides precise sufficient conditions implying the stability of minimizers of variational problems; we review its definition in Section \ref{Prelim} and we refer the interested reader to \cite{DalMaso} for a more complete discussion on the topic.

Most of the technical work in this paper is devoted to analysing the ``bias'' of the random functional $GTV$. We establish the $\Gamma$-convergence of a kernel based functional with inhomegeneous bandwidth (a sort of average of $GTV$) towards a (local) weighted total variation (see Propositions \ref{Lemma1} and \ref{LemmaAux} below). The inhomogeneity associated to the kernel based functional is intrinsic to the $k$-NN graph construction, where length-scales are determined by the Euclidean distance and the data density around each point .  The fact that the resulting kernel based approximation has a varying bandwidth makes our analysis different from that in \cite{GarciaTrillos2015}. At each point in space, the bandwidth depends inversely on the density of the ground-truth measure generating the data. 

It is possible to reinterpret this feature and think of it as fixing a bandwidth but now measuring distances with an effective metric induced by the ground-truth on the ambient space; this metric in particular shrinks distances on regions with low density. In this work we will not pursue this idea any further, but we anticipate that there are several advantages of doing so. In particular, it is of relevance to further investigate the dimension free condition $\log(n) \ll k \ll n$, 
and understand better the constants appearing in these asymptotic inequalities. This is of special relevance if we want to extend our results to settings where the ground-truth distribution is for example a Gaussian measure in an finite dimensional Hilbert space. Both the unboundedness of the support of the ground truth and the infinite dimension of the ambient space are settings that are not covered by the results in this paper. We believe that a better understanding of this and other related issues are of importance for a better understanding of $k$-NN graphs and their benefits.

We would like to finish this introduction by mentioning some of the growing literature on large sample asymptotic analysis of operators and functionals constructed from $\veps$-graphs.  Convergence of graph Laplacians can be found in the work by Belkin and Niyogi \cite{bel_niy_LB},   
Coifman and Lafon \cite{coifman1},
Gin\'e and Koltchinskii \cite{GK}, Hein, Audibert and von Luxburg \cite{hein_audi_vlux05}, and 
Singer \cite{Singer}. These works deal mostly with pointwise consistency of graph Laplacians. The work of Arias-Castro, Pelletier and Pudlo \cite{Pelletier} studies the pointwise convergence of Cheeger energy and consequently of total variation, as well as variational convergence when the discrete functional is considered over an admissible set of characteristic functions which satisfy a ``regularity" requirement.  Spectral convergence of graph Laplacians, (relevant for data clustering) has been studied, among others,  by Huang, and  Jordan \cite{THJ}, Belkin and Niyogi \cite{belkin2007convergence}, 
von Luxburg, Belkin and Bousquet \cite{vonLuxburg}, Singer and Wu \cite{SinWu13}. Most of the results previously mentioned are deduced using tools from perturbation theory of linear operators.  A different set of tools was introduced in \cite{GarciaTrillos2015} and later used in \cite{GarciaMurray,SpecCon,TSvBLX_jmlr}.  The notion of $\Gamma$-convergence (a.k.a. epi-convergence) and the introduction of a suitable metric (the $TL^1$ metric; see \cite{GarciaTrillos2015} and Section \ref{Prelim} below) 
allowing to compare functions at the graph level with functions at the continuum level, 
were crucial tools to deduce statistical consistency for a large class of balanced graph cuts  \cite{TSvBLX_jmlr} and for spectral clustering \cite{SpecCon}. In all these results, sharp convergence rates for $\veps:=\veps_n$ guaranteeing the consistency were provided. In the context of graph-based approaches to semi-supervised learning, it is worth mentioning the work \cite{ThorpeSlepcev} where the $p$-Laplacian regularization (for $p$ large enough) is studied, as well as the Bayesian approach in \cite{GraphBayes,GTASK} where the convergence of graph posteriors is analyzed.

\nc

\subsection{Outline} The rest of the paper is organized as follows. In Section \ref{setup} we make our assumptions precise, present some of the examples of variational problems that are relevant for important tasks in data analysis, and present the main results of the paper. In Section \ref{Prelim} we give the definitions of the $TL^1$ space and the notion of $\Gamma$-convergence; we also present some auxiliary results that are needed in the remainder. Finally, in Section \ref{Proofs} we present the proofs of the main results.

\section{Set-up and main results}
\label{setup}

We assume that the data $X_n:=\left\{ \x_1, \dots, \x_n \right\}$ are  i.i.d samples from a distribution $\nu$ on $\R^d$. $\nu$ is assumed to be an absolutely continuous measure with respect to the Lebesgue measure, with density $\rho: D \rightarrow \R$ where $D$ is a bounded, connected, open set with Lipschitz boundary, and $\rho$ is a continuous function bounded above and below by positive constants. Namely, we assume that there are positive constants $0< \rho_{min} < \rho_{max}$ such that for every $x \in D$ we have
\begin{equation}
  \rho_{min} \leq \rho(x) \leq \rho_{max}, \quad \forall x \in D.
\label{rhominmax}
\end{equation}
We denote by $\nu_n$ the empirical measure
\[ \nu_n:= \frac{1}{n}\sum_{i=1}^n \delta_{\x_i}, \] 
and write 
\[ \x_i \sim_k \x_j \]
whenever $\x_i$ is among the $k$-nearest neighors of $\x_j$ or vice-versa.

\begin{remark}
\label{Remknn}
Because $\nu$ has a density, with probability one, the condition $\x_i \sim_{k} \x_j$ is equivalent to
\[ \nu_n \left( B(\x_i, r) \right) \leq \frac{k}{n}\quad  \text{ or }  \quad \nu_n \left( B(\x_j, r) \right) \leq \frac{k}{n},  \]
where $r= \lvert \x_i - \x_j \rvert$. Without the loss of generality we assume this fact in the remainder.
\end{remark}

let us introduce some functions that we use in the remainder. Let $\eta: [0,\infty) \rightarrow \R$ be the Heaviside function given by 
\[ \eta(t) := \begin{cases}1  & \text{ if } t<1 \\ 0 & \text{ if } t \geq 1, \end{cases} \]
For an arbitrary $\veps>0$ we let $\eta_\veps$ be the function
\[   \eta_\veps(t) := \frac{1}{\veps^d}\eta\left( \frac{t}{\veps} \right), \quad t \in [0, \infty).\]
We let $\sigma_\eta$ be the quantity
\begin{equation}
\sigma_\eta := \int_{\R^d} \eta(z)|z_1|dz,
\label{sigma}
\end{equation}
where $z_1$ is the first coordinate of the vector $z$.  Notice that in the above expression $z_1$ can be replaced with $z \cdot v$ for any vector $v$ with unit norm.

\subsection{Graph total variation and total variation in the continuum}
In order to introduce the \textit{graph total variation} functional $GTV$ (an appropriate rescaled version of the numerator in \eqref{CheegerCut}), it will be convenient to define
\[ J_k(\x_i, \x_j):= \begin{cases} 1, \quad \x_i \sim_k \x_j \\ 0, \quad \text{otherwise} \end{cases}  \]
and for $k:= k_n \in \N$, let $\bar{\veps}_n$ be the number for which
\begin{equation}
 \bar{\veps}_n^d = \frac{k_n}{n}. 
 \label{hateps}
\end{equation}
The \textit{graph total variation} functional is then defined as
\begin{equation}
 GTV_{n,k}(u_n):= \frac{1}{n^2 \bar{\veps}_n^{d+1}} \sum_{i,j}J_{k}(\x_i, \x_j)  \lvert u_n(\x_i) - u_n(\x_j)  \rvert , \quad u_n \in L^1(\nu_n).
 \label{GraphTV}
\end{equation}
Notice that when $u_n = \mathds{1}_{A_n}$ for some $A_n \subseteq X_n$,  $GTV_{n,k}(u_n)$ is just a rescaled version of the denominator of the $\cut_n$ functional in \eqref{CheegerCut}.
At the discrete level we are interested in the following optimization problem
 \begin{equation}
\min_{A_n \subseteq X_n} \frac{GTV_{n,k}(\mathds{1}_{A_n}) }{\frac{1}{n} \min \{ \lvert A \rvert, \lvert A^c \rvert \} }.
\label{CheegerCut2}   
\end{equation}

The continuum counterpart of the graph total variation is a functional that takes the following form.
\begin{definition}
Let  $h: D \rightarrow \R$ be a continuous function bounded below and above by positive constants. For an arbitrary function $u\in L^1(D)$, we define its weighted (by $h$) total variation as
\begin{equation} 
TV(u; h) :=  \sup \left\{    \int_{D} u(x) \divergence(\zeta)dx \: : \: \zeta \in C^\infty_c(D: \R^d),  \quad \lvert \zeta(x) \rvert \leq h(x) \quad \forall x \in D    \right\}.
\label{TV}
\end{equation}
In the above and in the remainder of the paper, we use $L^1(D)$ to represent the space of $L^1$ functions with respect to the Lebesgue measure restricted to $D$.  In addition, when $h \equiv 1$ we write $TV(u)$ instead of $TV(u;h)$ and we denote by $BV(D)$ the space of functions $u \in L^1(D)$ for which $TV(u) < \infty$. Notice that for any $h$ continuous and bounded above and below by positive constants, the condition $TV(u)< \infty$ is equivalent to the condition $TV(u; h) < \infty$.
\end{definition}

\begin{remark}
If  $u \in BV(D)$ is smooth, then 
\[ TV(u;h)= \int_{D} \lvert  \nabla u \rvert h(x) dx.  \]
Also, if $u = \mathds{1}_A$ for some open set $A$ with smooth boundary, then 
\[ TV(u;h) = \int_{\partial A \cap D} h(x) d\mathcal{H}^{d-1}(x), \]
where $\mathcal{H}^{d-1}$ is the $(d-1)$-dimensional Hausdorff measure.
\end{remark}

At the continuum level we will be interested in a variational problem of the form
\begin{equation}
\min_{A \subseteq D} \frac{TV(\mathds{1}_A; h)}{\min \{ \nu(A), \nu(D \setminus A) \}},
\label{CheegerCut2Cont}
\end{equation}
for an appropriately chosen function $h$.

\subsection{Main results}

Our main result establishes the convergence of minimizers of \eqref{CheegerCut2} towards minimizers of \eqref{CheegerCut2Cont}. Notice however that solutions to \eqref{CheegerCut2} are discrete sets, whereas solutions to \eqref{CheegerCut2Cont} are continuum sets, and so we need to clarify the sense in which we will establish the convergence of discrete minimizers towards continuum ones. The convergence is taken in the $TL^1$ space introduced in \cite{GarciaTrillos2015}, where in particular functions on the point cloud and functions on $D$ are seen as elements of the same space (see section \ref{Prelim} below for details).

We are ready to establish our main result.

\begin{theorem}
	\label{main0}
Let $d \geq 3$  and let $D \subseteq \R^d$ be an open, connected, bounded domain with Lipschitz boundary.  Let $\rho: D \rightarrow \R$ be a continuous density function satisfying \eqref{rhominmax} and let $\nu$ be the probability measure $d \nu= \rho dx$. Let $\left\{ k_n \right\}_{n \in \N}$ be a sequence of natural numbers satisfying
\[ \lim_{n \rightarrow \infty} \frac{k_n}{\log(n)} =+\infty, \quad \lim_{n \rightarrow \infty}  \frac{k_n}{n}=0. \]
Let $\{ \x_1, \dots, \x_n\}$ be i.i.d. points from $\nu$ and let $A_n^*$ be a solution to \eqref{CheegerCut2}.

Then, with probability one, along every subsequence of $\{A_n^*\}_{n \in \N}$ there is a further subsequence converging in the $TL^1$-sense towards a minimizer of \eqref{CheegerCut2Cont}, where 
\[ h(x):= \rho^{1-1/d}(x).  \]

Moreover, the minimum value of \eqref{CheegerCut2} converges as $n$ goes to infinity towards $\sigma_eta/ \alpha_d^{1+1/d}$ times the minimum value of \eqref{CheegerCut2Cont}, where $\sigma_\eta$ is defined in \eqref{sigma} and $\alpha_d$ is the volume of the unit ball in $\R^d$. 
\end{theorem}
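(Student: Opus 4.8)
\textit{Proof strategy.}
The plan is to deduce Theorem \ref{main0} from the $\Gamma$-convergence, with respect to the $TL^1$ topology, of the rescaled graph total variation $GTV_{n,k}$ towards $\frac{\sigma_\eta}{\alpha_d^{1+1/d}}\,TV(\,\cdot\,; h)$ with $h=\rho^{1-1/d}$, together with an associated compactness statement. Granting these two facts, the convergence of minimizers follows the now-standard template of \cite{GarciaTrillos2015,TSvBLX_jmlr}: the functional in \eqref{CheegerCut2} is a ratio whose numerator $\Gamma$-converges and whose denominator $\frac1n\min\{|A_n|,|A_n^c|\}=\min\{\nu_n(A_n),\nu_n(A_n^c)\}$ is continuous along $TL^1$-convergent sequences (transport maps pushing $\nu_n$ onto $\nu$ turn $TL^1$-convergence of $\mathds{1}_{A_n}$ into $L^1(\nu)$-convergence of the limit), so the ratios $\Gamma$-converge. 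Combined with compactness this gives that accumulation points of the minimizers $A_n^*$ are minimizers of \eqref{CheegerCut2Cont} and that the minimum values converge. To run this one also records: an a priori upper bound on the minimum values, obtained from a fixed smooth competitor set together with a recovery sequence, so that the minimizing sequence satisfies the hypotheses of the compactness lemma; a standard argument (a discrete isoperimetric-type estimate) preventing the minimizing sets from degenerating; and the direct method at the continuum level — lower semicontinuity of $TV(\,\cdot\,;h)$ plus $BV$-compactness — to guarantee that \eqref{CheegerCut2Cont} actually admits a minimizer.

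The heart of the matter is the $\Gamma$-convergence of $GTV_{n,k}$, and the strategy (announced around Propositions \ref{Lemma1} and \ref{LemmaAux}) is to interpose a deterministic, kernel-based functional with a spatially inhomogeneous bandwidth. Heuristically, by Remark \ref{Remknn} the condition $\x_i\sim_k\x_j$ with $r=|\x_i-\x_j|$ is, up to the fluctuations of $\nu_n$, the condition $\rho(\x_i)\alpha_d r^d\lesssim k/n=\bar{\veps}_n^d$, so $J_k(\x_i,\x_j)$ behaves like $\eta(|\x_i-\x_j|/\veps_{\x_i})$ with local length-scale $\veps_x:=\bar{\veps}_n(\alpha_d\rho(x))^{-1/d}$. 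Replacing the double sum by a double integral against $\nu$ and changing variables $y=x+\veps_x z$ gives, for smooth $u$,
\[
GTV_{n,k}(u)\;\approx\;\frac{1}{\alpha_d\bar{\veps}_n}\int_D\!\int \eta(|z|)\,|u(x)-u(x+\veps_x z)|\,\rho(x+\veps_x z)\,dz\,dx\;\longrightarrow\;\frac{\sigma_\eta}{\alpha_d^{1+1/d}}\int_D|\nabla u(x)|\,\rho(x)^{1-1/d}\,dx,
\]
which is $\frac{\sigma_\eta}{\alpha_d^{1+1/d}}TV(u;h)$ since $\veps_x\rho(x)=\bar{\veps}_n\alpha_d^{-1/d}\rho(x)^{1-1/d}$ and $\int\eta(|z|)\,|z\cdot v|\,dz=\sigma_\eta$ for any unit vector $v$. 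Propositions \ref{Lemma1} and \ref{LemmaAux} make this precise at the level of the deterministic inhomogeneous-bandwidth functional; the remaining, decisive, task is to show that with probability one $GTV_{n,k}$ is uniformly close — in the sense needed for both the $\Gamma$-$\liminf$/$\limsup$ inequalities and the compactness — to that functional.

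This last step is where the scaling $\log(n)\ll k\ll n$ is consumed, in two ways. First, to pass from sums against $\nu_n$ to integrals against $\nu$ one uses sharp $\infty$-optimal-transport estimates: there are maps $T_n\colon D\to X_n$ with $\|T_n-\id\|_\infty\le\delta_n$ where, for $d\ge 3$, $\delta_n\sim(\log(n)/n)^{1/d}$, and the comparison is only effective when $\delta_n\ll\bar{\veps}_n=(k/n)^{1/d}$, i.e.\ when $k\gg\log(n)$; this is also the reason the hypothesis $d\ge 3$ appears, since for $d=2$ the transport rate carries an extra logarithmic factor. Second, because the indicator $J_k$ is itself a function of the random quantities $\nu_n(B(\x_i,r))$, one needs concentration of $\nu_n(B(x,r))$ around $\nu(B(x,r))$ uniformly in $(x,r)$ at scales $r\sim\bar{\veps}_n$; such a ball carries $\Theta(k)$ points, so the relative fluctuations are $O(\sqrt{\log(n)/k})$, again negligible precisely when $k\gg\log(n)$. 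I expect the main obstacle to be exactly this coupling of two sources of randomness — the point locations and the data-dependent, spatially varying bandwidth implicit in the $k$-NN rule — which is what distinguishes the analysis from the fixed-bandwidth $\veps$-graph case of \cite{GarciaTrillos2015}: one must control the error terms uniformly over the admissible sets (and over $z$ in the support of $\eta$) while keeping constants compatible with the $TL^1$ recovery-sequence construction. The condition $k\ll n$, by contrast, enters only softly, to ensure $\bar{\veps}_n\to 0$ so that the non-local functional localizes to a genuine $(d-1)$-dimensional surface term.
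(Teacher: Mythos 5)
Your proposal correctly identifies the paper's approach: reduce Theorem \ref{main0} to the $\Gamma$-convergence and compactness statements (Theorems \ref{Main} and \ref{Compac}) via the machinery of \cite{TSvBLX_jmlr}, prove $\Gamma$-convergence by interposing the deterministic inhomogeneous-bandwidth kernel functional of Propositions \ref{Lemma1}--\ref{LemmaAux}, and use the $\infty$-transportation estimate with $\delta_n\sim(\log n/n)^{1/d}$ together with $k_n\gg\log n$ (and $d\ge 3$) to compare the random $GTV_{n,k_n}$ with that functional. The only minor departure is that you invoke a separate binomial concentration bound of order $\sqrt{\log n/k}$ for $\nu_n(B(x,r))$, whereas the paper obtains the required two-sided comparison $J_{k_n}(T_n(x),T_n(y))\gtrless\eta(|x-y|/\hat\veps_n(x))$ entirely from the transportation bound $\lVert T_n-\id\rVert_\infty\le\delta_n$ by perturbing the local radius $\veps_n(x)$ by $O(\delta_n)$ --- the resulting relative control is of the weaker order $(\log n/k)^{1/d}$, but this still suffices and lets one tool do both jobs.
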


\begin{remark}
In the above theorem if the minimizer $\{A^*, A^{*c} \}$ of \eqref{CheegerCut2Cont} is unique, then the convergence is along the entire sequence of discrete minimizers.
\end{remark}

As discussed in the introduction, and especially as shown in \cite{TSvBLX_jmlr}, our results are a direct corollary of Theorems \ref{Main} and \ref{Compac} below. We show that the $\Gamma$-limit (in the $TL^1$-sense) of the functional $GTV_{n,k_n}$ (for the same scaling $k=k_n$ as in Theorem \ref{main0}) is the functional $TV(\cdot ; \rho^{1-1/d})$. The notion of $\Gamma$-convergence and its connection to the stability of minimizers of functionals are reviewed in section \ref{Prelim}.

\nc

\begin{theorem}($\Gamma$-convergence)
Let $d \geq 3$  and let $D \subseteq \R^d$ be an open, connected, bounded domain with Lipschitz boundary.  Let $\rho: D \rightarrow \R$ be a continuous density function satisfying \eqref{rhominmax}. 
Let $\left\{ k_n \right\}_{n \in \N}$ be a sequence of natural numbers satisfying
\[ \lim_{n \rightarrow \infty} \frac{k_n}{\log(n)} =+\infty, \quad \lim_{n \rightarrow \infty}  \frac{k_n}{n}=0. \]
Then, the functionals $GTV_{n, k_n}$ $\Gamma$-converge towards $\frac{\sigma_\eta}{\alpha_d^{1+1/d}} TV(\cdot ; \rho^{1-1/d}) $ in the $TL^1$-sense, where $\sigma_\eta$ is defined in \eqref{sigma}, and $\alpha_d$ 
is the volume of the unit ball in $\R^d$.  Furthermore, when restricted to indicator functions, the energies $GTV_{n,k_n}$ 
$\Gamma$-converge to the functional $\frac{\sigma_\eta}{\alpha_d^{1+1/d}} TV(\cdot ; \rho^{1-1/d}) $ restricted to indicator functions.
\label{Main}
\end{theorem}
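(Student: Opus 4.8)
The plan is to verify the two inequalities that characterize $\Gamma$-convergence in the $TL^1$ topology: a liminf inequality, $\liminf_n GTV_{n,k_n}(u_n) \ge \frac{\sigma_\eta}{\alpha_d^{1+1/d}} TV(u;\rho^{1-1/d})$ whenever $u_n \to u$ in $TL^1$, and the existence, for every $u \in L^1(D)$, of a recovery sequence $u_n \to u$ with $\limsup_n GTV_{n,k_n}(u_n) \le \frac{\sigma_\eta}{\alpha_d^{1+1/d}} TV(u;\rho^{1-1/d})$ (together with Theorem \ref{Compac} this yields the consistency statement of Theorem \ref{main0}). Following \cite{GarciaTrillos2015,TSvBLX_jmlr}, the central device is to transfer the problem to the continuum via a stagnating sequence of transportation maps $T_n \colon D \to X_n$ with $(T_n)_\#\nu = \nu_n$ and $\|T_n - \id\|_{L^\infty} \to 0$ almost surely; for $d \ge 3$ the relevant rate is $\|T_n-\id\|_{L^\infty} = O((\log n / n)^{1/d})$, which under $k_n \gg \log n$ is $o(\bar\veps_n)$, and this is the first place the scaling of $k_n$ enters. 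Pulling $u_n$ back along $T_n$ turns $TL^1$-convergence into $L^1(D)$-convergence and rewrites $GTV_{n,k_n}(u_n)$ as a nonlocal functional of the continuum function $u_n \circ T_n$ integrated against $\nu \otimes \nu$.

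The second, and crucial, reduction is from the random $k$-NN adjacency $J_{k}$ to a deterministic, spatially inhomogeneous kernel. Writing $\delta(x) := (\alpha_d \rho(x))^{-1/d}$, the $k_n$-nearest-neighbour radius around $x$ is to leading order $\bar\veps_n \delta(x)$. Using the reformulation in Remark \ref{Remknn} together with a deviation estimate for the empirical measure that is uniform over the VC class of Euclidean balls, one shows that, with probability one, for $n$ large there is $\theta_n \to 0$ with
\[ \eta\!\left( \frac{|\x_i - \x_j|}{(1+\theta_n)\,\bar\veps_n\,(\delta(\x_i)\vee\delta(\x_j))} \right) \;\le\; J_{k}(\x_i,\x_j) \;\le\; \eta\!\left( \frac{|\x_i - \x_j|}{(1-\theta_n)\,\bar\veps_n\,(\delta(\x_i)\vee\delta(\x_j))} \right) \]
uniformly in $i,j$ (away from a negligible boundary layer). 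This sandwiches $GTV_{n,k_n}$ between two deterministic kernel functionals with inhomogeneous bandwidth $(1\pm\theta_n)\bar\veps_n\,\delta(\cdot)$, whose $\Gamma$-limits — by Propositions \ref{Lemma1} and \ref{LemmaAux} — both equal $\frac{\sigma_\eta}{\alpha_d^{1+1/d}} TV(\cdot;\rho^{1-1/d})$; the constant is pinned down by the change of variables $y = x + \bar\veps_n \delta(x) z$, which produces $\delta(x)^{d+1}\rho(x)^2 \sigma_\eta |\nabla u(x)| = \alpha_d^{-1-1/d}\rho(x)^{1-1/d}\sigma_\eta|\nabla u(x)|$ as the local density of the limit functional.

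For the liminf inequality one then bounds $GTV_{n,k_n}(u_n)$ from below by the lower kernel functional evaluated at $u_n\circ T_n$ and applies the liminf half of the inhomogeneous-bandwidth result. For the recovery sequence, by lower semicontinuity of $TV(\cdot;h)$ in $L^1$, lower semicontinuity of the $\Gamma$-upper limit, and a diagonal argument, it suffices to treat a class of $u$ dense in energy: Lipschitz functions, for which $u_n := u|_{X_n}$ works by the law of large numbers applied to the now-deterministic kernel sum controlled by the sandwich above; and, for the statement restricted to indicator functions, sets with smooth boundary, with $u_n := \mathds{1}_{A\cap X_n}$, using the standard approximation of $BV$ sets in energy together with the extraction of indicator recovery sequences as in \cite{TSvBLX_jmlr}. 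Throughout, the contribution of points within $O(\bar\veps_n)$ of $\partial D$ must be shown to vanish, using the Lipschitz regularity of $\partial D$ and $\bar\veps_n\to 0$.

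The main obstacle is the deterministic-kernel reduction: establishing the uniform two-sided control of the random $k$-NN radii. The difficulty is that the radius at which an edge appears is itself a data-dependent random variable, so one needs $\nu_n$ to approximate $\nu$ simultaneously on every ball $B(x,r)$ with $r \asymp \bar\veps_n\delta(x)$; this requires a concentration inequality uniform over the class of balls (VC dimension $d+1$), and the union-bound cost $\log n$ must be dominated by the expected ball mass $n\bar\veps_n^d = k_n$ — which is precisely the hypothesis $k_n \gg \log n$, while $k_n/n \to 0$ keeps $\bar\veps_n \to 0$. Obtaining $\theta_n \to 0$ with the right almost-sure (Borel–Cantelli) bookkeeping, and simultaneously handling the anisotropy of the $k$-NN radius near $\partial D$ where balls are truncated by the boundary, is where the real work lies; once the sandwich is in place, the remainder follows from the already-established inhomogeneous-bandwidth $\Gamma$-convergence and routine $BV$ approximation.
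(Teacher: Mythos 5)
Your plan matches the paper's architecture: pull back via transportation maps $T_n$ with $\|T_n-\id\|_\infty = O((\log n/n)^{1/d})$, sandwich the $k$-NN adjacency between two deterministic inhomogeneous-bandwidth kernels, and invoke Proposition~\ref{Lemma1} to pass to $TV(\cdot;\rho^{1-1/d})$ with the constant $\sigma_\eta/\alpha_d^{1+1/d}$. The constant computation via the change of variable $y = x + \bar\veps_n\delta(x)z$ is correct, and you correctly identify $k_n\gg\log n$ as the source of the dimension-free scaling.

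There are, however, two substantive issues. First, you propose a separate VC-class relative deviation estimate to control $\nu_n$ uniformly on balls, on top of the $\infty$-transportation bound. The paper avoids this second layer of machinery: the estimate $\|T_n-\id\|_\infty\le\delta_n$ already yields $\nu(B(p,r-\delta_n))\le\nu_n(B(p,r))\le\nu(B(p,r+\delta_n))$ for \emph{all} balls simultaneously, because $B(p,r-\delta_n)\subseteq T_n^{-1}(B(p,r))\subseteq B(p,r+\delta_n)$. So the same transportation bound serves both to convert $TL^1$- into $L^1$-convergence and to replace $J_{k_n}$ by a deterministic kernel; a separate concentration argument is redundant. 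Your VC route can be made to work but is not the paper's, and the bookkeeping (relative Bernstein uniform over balls, Borel--Cantelli) would all have to be carried out from scratch.

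Second, and more importantly, there is a genuine gap at the boundary. You set the deterministic radius at $x$ equal to $\bar\veps_n\delta(x)$ with $\delta(x)=(\alpha_d\rho(x))^{-1/d}$, which is the interior-of-$D$ formula. Near $\partial D$ the ball $B(x,r)$ is truncated, $\nu(B(x,r))<\alpha_d\rho(x)r^d$, and the true $k$-NN radius exceeds $\bar\veps_n\delta(x)$ by a non-vanishing multiplicative factor. Proposition~\ref{Lemma1} is specifically built so that the factor $\hat f_n(x)=\hat\veps_n(x)^d/\nu(B(x,\veps_n(x)))^{1+1/d}$ exactly matches the prefactor $\hat\veps_n(x)^d/\bar\veps_n^{d+1}$ that falls out of the pulled-back $GTV$; this only happens if $\veps_n(x)$ is defined implicitly by $\nu(B(x,\veps_n(x)))=k_n/n$, i.e.\ the paper's choice, which automatically ``sees'' the boundary. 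If instead $\veps_n(x)=\hat\veps_n(x)=\bar\veps_n\delta(x)$, the hypothesis $\sup_x|\veps_n(x)/\hat\veps_n(x)-1|\to 0$ fails once you try to compare with the paper's $\veps_n$, and the kernel functional you obtain as a lower bound on $GTV$ is strictly smaller than the one Proposition~\ref{Lemma1} treats on a boundary strip of width $\asymp\bar\veps_n$ — and this strip cannot simply be discarded, because for the liminf inequality $u_n$ may concentrate its variation there. Declaring the boundary layer ``negligible'' is not enough: the paper resolves this precisely by defining $\veps_n(x)$ through the measure, so that the inhomogeneity near $\partial D$ is absorbed into $\hat f_n$, and by using the cone condition to show $\veps_n(x)\asymp\bar\veps_n$ uniformly up to the boundary. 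Finally, a small typo: in your sandwich the $(1+\theta_n)$ and $(1-\theta_n)$ factors are swapped — $\eta$ is a decreasing indicator, so the \emph{smaller} bandwidth gives the lower bound on $J_k$ and the \emph{larger} one the upper bound.
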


\begin{theorem}(Compactness)
Under the same assumptions in Theorem \ref{Main}, the following statement holds with probability one:
If $\left\{  u_n\right\}_{n \in \N}$  is a sequence with $u_n \in L^1(\nu_n)$ for which
\[ \sup_{n \rightarrow \infty} \lVert u_n \rVert_{L^1(\nu_n)} < \infty   \]
and
\[ \sup_{n \rightarrow \infty} GTV_{n,k_n}(u_n) < \infty, \]
then $\left\{ u_n \right\}_{n \in \N}$ is pre-compact in $TL^1(D)$. That is, every subsequence of $\left\{ u_n \right\}_{n \in \N}$ has a further subsequence converging in $TL^1(D)$.
\label{Compac}
\end{theorem}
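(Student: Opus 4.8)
The plan is to follow the transportation-map approach of \cite{GarciaTrillos2015}: transfer each $u_n$ to a genuine $L^1(D)$ function via a near-identity transport map, reduce the statement to $L^1(D)$-precompactness of the transferred family, and obtain that by bounding a \emph{fixed}-bandwidth nonlocal total variation from above by $GTV_{n,k_n}$. I would start from the stochastic $L^\infty$ optimal-transport estimate of \cite{GarciaTrillos2015}: since $d\ge 3$, with probability one there exist Borel maps $T_n\colon D\to X_n$ with $(T_n)_\#\nu=\nu_n$ and $\delta_n:=\lVert T_n-\id\rVert_{L^\infty(\nu)}\lesssim (\log n/n)^{1/d}$; since $\bar{\veps}_n^d=k_n/n$, the hypothesis $k_n\gg\log n$ gives $\delta_n/\bar{\veps}_n\lesssim (\log n/k_n)^{1/d}\to 0$. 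Work on a fixed probability-one event on which this holds, on which $\nu_n\rightharpoonup\nu$, and on which the uniform ball-concentration estimate below holds. Put $v_n:=u_n\circ T_n\in L^1(D)$; then $\lVert v_n\rVert_{L^1(D)}\le \rho_{min}^{-1}\lVert u_n\rVert_{L^1(\nu_n)}$ stays bounded, and the fibers $\{T_n^{-1}(\x_i)\}_i$ partition $D$ into Borel sets of Lebesgue measure $\le (n\rho_{min})^{-1}$, each contained in $B(\x_i,\delta_n)$.

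Next I would pass from $GTV_{n,k_n}$ to a continuum nonlocal total variation. A VC/Bernstein concentration bound for $\nu_n$ on Euclidean balls — available precisely because $n\bar{\veps}_n^d=k_n\gg\log n$ — gives, with probability one, $\sup_{z\in D}\nu_n(B(z,r))\le \alpha_d\rho_{max}r^d(1+o(1))$ uniformly for $r$ of order $\bar{\veps}_n$. Fix $\veps_n:=c\,\bar{\veps}_n$ with a constant $c<(\alpha_d\rho_{max})^{-1/d}$; then for $n$ large $\nu_n(B(\x_i,\veps_n+2\delta_n))\le k_n/n$ for every $i$, so by Remark \ref{Remknn} every pair with $\lvert \x_i-\x_j\rvert\le \veps_n+2\delta_n$ satisfies $\x_i\sim_{k_n}\x_j$. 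Splitting the double integral over the fibers $T_n^{-1}(\x_i)\subseteq B(\x_i,\delta_n)$, only such pairs contribute, and bounding each fiber's measure by $(n\rho_{min})^{-1}$ gives
\[
\frac{1}{\veps_n^{d+1}}\int_D\!\!\int_{D\cap B(x,\veps_n)}\!\! \lvert v_n(x)-v_n(y)\rvert\, dy\, dx \;\le\; C\, GTV_{n,k_n}(u_n)\;\le\; M
\]
for a deterministic constant $M<\infty$. Here it is enough to use the crudest uniform lower bound $\bar{\veps}_n(\alpha_d\rho_{max})^{-1/d}$ for the effective connectivity radius, so the \emph{inhomogeneity} of the $k$-NN length scale — the crux of Theorem \ref{Main} — causes no difficulty for compactness.

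To conclude, the displayed bound is exactly the nonlocal-TV hypothesis of the $L^1$-compactness lemma of \cite{GarciaTrillos2015} (Fréchet–Kolmogorov together with a boundary-layer estimate exploiting the Lipschitz regularity of $\partial D$): combined with $\sup_n\lVert v_n\rVert_{L^1(D)}<\infty$ it yields precompactness of $\{v_n\}$ in $L^1(D)$, equivalently in $L^1(\nu)$ since $\rho_{min}\le\rho\le\rho_{max}$. Finally, by the characterization of $TL^1$-convergence through transport maps (\cite{GarciaTrillos2015}), if $v_{n_j}=u_{n_j}\circ T_{n_j}\to w$ in $L^1(\nu)$ along a subsequence then $u_{n_j}\to w$ in $TL^1(D)$ (recall $\nu_n\rightharpoonup\nu$ and $\delta_n\to 0$); hence every subsequence of $\{u_n\}$ has a $TL^1$-convergent further subsequence, i.e.\ $\{u_n\}$ is precompact in $TL^1(D)$.

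The main obstacle is the middle step: coupling the combinatorial $k$-NN adjacency to a fixed-bandwidth continuum kernel \emph{uniformly over $D$}, which requires the simultaneous use of the $L^\infty$ transport bound and the uniform concentration of $\nu_n$ on small balls — and this is exactly where the hypotheses $d\ge 3$ and $\log n\ll k_n\ll n$ are consumed. Everything downstream parallels the $\veps$-graph compactness argument of \cite{GarciaTrillos2015}.
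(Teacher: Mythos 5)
Your proof is correct and follows essentially the same strategy as the paper: dominate the $k$-NN connectivity from below by a fixed-bandwidth structure at scale $\bar{\veps}_n$ and then invoke the $\veps$-graph compactness machinery of \cite{GarciaTrillos2015}. Two minor differences are worth noting. First, the paper stays at the discrete level: it defines $\veps_n^l:=\inf_{x\in D}\hat\veps_n(x)-2\delta_n$, shows $\veps_n^l/\bar{\veps}_n\ge c>0$ and $J_{k_n}(\x_i,\x_j)\ge\eta(\lvert\x_i-\x_j\rvert/\veps_n^l)$, deduces $GTV_{n,k_n}(u_n)\ge c^d\,GTV_{n,\veps_n^l}(u_n)$, and then cites Theorem 1.2 of \cite{GarciaTrillos2015} as a black box; you instead unpack that box one level by transferring to $v_n=u_n\circ T_n$ and bounding the continuum nonlocal total variation, which is precisely how that $\veps$-graph theorem is proved. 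Second, you invoke a separate VC/Bernstein concentration estimate to control $\nu_n$ uniformly on small balls; this extra tool is not needed, since the $L^\infty$ transport bound $\lVert T_n-\id\rVert_\infty\le\delta_n$ already gives $\nu_n(B(z,r))=\nu(T_n^{-1}(B(z,r)))\le\nu(B(z,r+\delta_n))\le\alpha_d\rho_{max}(r+\delta_n)^d$, which, combined with $\delta_n\ll\bar{\veps}_n$, is the only uniform ball estimate the paper uses — it is exactly how the paper's construction of $\hat\veps_n$ absorbs the discrete-vs-continuum discrepancy. Neither difference affects correctness; your route is a valid and slightly more self-contained variant of the paper's argument.
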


\begin{remark}
The above theorems hold for domains $D$ in $\R^2$ provided we replace the condition $ \lim_{n \rightarrow \infty} \frac{k_n}{\log(n)}= \infty$ with the condition $ \lim_{n \rightarrow \infty} \frac{k_n}{(\log(n))^{3/2}}= \infty $ . 
This can be seen directly from our proofs, and follows from the fact that the rate of convergence of the $\infty$-transportation distance between $\nu$ and $\nu_n$ (for $d=2$) scales like $\frac{\log(n)^{3/4}}{n^{1/2}}$ and not like $\frac{(\log(n))^{1/2}}{n^{1/2}}$ (see \cite{W8L8}).    
\end{remark}

\begin{remark}
\label{RemManifold}
The above set-up and results can be extended to the setting in which the support of $\nu$ is not a domain $D$ contained in the ambient space $\R^d$ but a compact manifold $\M \subseteq \R^d$ with intrinsic dimension $m$. 
When that is the case, the appropriate scalings for the functionals are obtained using the intrinsic dimension $m$ and not the dimension of the ambient space $d$. 
Although in this paper we omit the details of such extension, we point out that the results in \cite{Ponce} (later used in the proofs in \cite{GarciaTrillos2015}) can be adapted 
to the manifold setting by establishing results analogue to those in \cite{W8L8} in the manifold setting, using the geodesic flow in $\M$, and the fact that the Euclidean distance (distance in the ambient space $\R^d$) is a third order approximation for the geodesic distance (intrinsic distance), i.e. ,
\[ d_{\M}(x,y) = \lvert x-y \rvert + O(\lvert x-y \rvert^3), \quad  x,y \in \M.\]
The details can be presented elsewhere in a more general setting which is also of interest; see Remark \ref{Remdimension} below.
\end{remark}

\begin{remark}
\label{Remdimension}
Suppose that $d \geq 3$. One of the important consequences of Theorem \ref{Main} is that the admissible regimes for $k=k_n$ that guarantee the recovery of a non-trivial variational limit for the energies $GTV_{n,k}$
does not depend on $d$ (we impose $n \gg k_n \gg \log(n)$). As a consequence, notice that despite the fact that the scaling factor appearing in $GTV_{n,k}$ depends on $d$, 
the minimizers of the Cheeger cut are unaffected by a rescaling of the energy by a positive constant. In other words, in principle we do not need to know the dimensionality of the data before hand in order to obtain good clusters using the graph total variation associated to a $k$-NN graph. Notice that the same is not true for $\veps$-graphs. In particular,  it is plausible to extend the consistency of Cheeger cuts to the setting in which the dimensionality of the data changes in space, i.e., if $\nu$'s support is something like a union of manifolds with different intrinsic dimensions. This problem will be explored in the future.
\end{remark}

%


\subsection{Other examples of relevant variational problems on graphs}

In this section we present a  variety of examples of other optimization problems on graphs that are relevant for data analysis. The common structure in all these optimization problems is that in their objective functions the highest order term is either a graph total variation or a $L^p$ version of it; for this reason theorems \ref{Main} and \ref{Compac} (in conjunction with Proposition \ref{porpGamma}) are of relevance beyond the setting of Theorem \ref{main0}.

For the rest of this section $(w_{ij})_{ij}$ represents a similarity graph on a data set (not necessarily a $k$-NN graph).

%
%

\begin{example}(Ratio graph cuts for clustering). A functional closely related to the Cheeger cut in \eqref{CheegerCut} is the \textit{ratio cut} defined by
	\[ \cut_{R}(A, A^c) := \frac{\sum_{\x_i \in A}\sum_{\x_j \not \in A} w_{ij}  }{  \lvert A \rvert \cdot \lvert A^c \rvert } , \quad A \subseteq X_n. \]
	Both the above functional and the Cheeger cut can be seen as examples of a larger class of functionals known as \textit{balanced cuts}. 
	This type of functionals penalize partitions $\{A, A^c \}$ of $X_n$ that either have a big interface separating $A$ and $A^c$ or are highly unbalanced in the sense that $A $ and $ A^c $ are of dissimilar size; 
	intuitively, these are desirable features for a good partitioning of the data and motivate considering the minimization problem
	\[ \min_{A \subseteq X_n} \cut(A, A^c ),\]
	to obtain a good partition of $X_n$. Notice that the denominator in both functionals is, up to a multiplicative factor, the graph total variation defined by
	\[  GTV(\mathds{1}_A):= \sum_{i,j} w_{ij} \lvert  \mathds{1}_A (\x_i) - \mathds{1}_A(\x_j) \rvert. \]
	
	Assuming that the set of vertices consists of samples from the distribution $\nu$ and that the weights of the graph are those obtained from an $\veps$-graph 
	(with $\veps:= \veps_n$ chosen appropriately), 
	the results in \cite{TSvBLX_jmlr} state that minimizers of the Cheeger and ratio cuts converge, as $n \rightarrow \infty$, 
	towards solutions to analogue variational problems in the continuum.
	This result can then be interpreted as a consistency result for clustering using balanced cuts.
	\label{Exam1} 
\end{example}

\begin{example} (Graph total variation in the context of classification). Suppose that  
	\[\{ (\x_1,y_1), \dots , (\x_n,y_n) \}\] 
	are samples from some distribution $\bnu$ supported on $\overline{D} \times \{0,1 \}$.  
	The pair $(\x_i, y_i)$ is interpreted as the feature values ($\x $ variable) and label ($y$ variable) of an individual from a given population. 
	Based on the observed data (training data) the idea is to construct a ``good'' \textit{classifier} assigning labels to every potential individual in the population. A typical choice of risk functional used to define ``good'' classifiers is the \textit{average missclassification error}. Unfortunately, when the distribution $\bnu$ is unknown (as it is usually the case) 
	it is not possible to determine the classifier that minimizes the average misclassification error (Bayes classifier) and hence an approximation to it based exclusively on the observed data is the best thing one can hope for. 
	Such an approximation can be obtained using the graph total variation as we describe below. 
	
	Given the weighted graph $( \{ \x_i\}_i , (w_{ij})_{i,j})$, consider the energy
	\[  R_{n,\lambda}(u):=  R_n(u) + \lambda GTV(u), \quad u : \{ \x_1, \dots, \x_n \} \rightarrow \R .\]
	The functional $R_n$ is the \textit{empirical risk} and the parameter $\lambda$ is introduced so as to emphasize or deemphasize the regularizing effect of the graph total variation. In \cite{GarciaMurray}, the weights for the graph are assumed to be those coming from an $\veps$-graph 
	and the problem of obtaining an approximation to the Bayes classifier is divided into first solving the minimization problem
	\[ \min_{u: X_n \rightarrow \R } R_{n, \lambda}(u), \]
	and then extend the minimizer to the whole ambient space appropriately. With the right scaling for $\lambda=\lambda_n$, one can establish the asymptotic consistency of the constructed approximation. See \cite{GarciaMurray} for more details.
\end{example}

\begin{example}(Spectral clustering and spectral embeddings). 
	Undoubtedly, one of the most popular graph based methods for data clustering is \textit{spectral clustering}.  
	In the two way clustering setting, spectral clustering can be seen as a relaxation of the ratio cut minimization problem mentioned in Example \ref{Exam1}. Indeed, the relaxed problem takes the form
	\[  \min_{ u: X_n \rightarrow \R} \frac{ \sum_{i,j}w_{ij} ( u(\x_i) - u(\x_j) )^2  }{\sum_{i} ( u (\x_i) - \overline{u} )^2   }, \]
	where $\overline{u}$ is the average $\overline{u}:=\frac{1}{n}\sum_{i}u(v_i)$; the denominator of the above objective function is a $L^2$-version of the graph total variation. It is well known that the above optimization problem is actually an eigenvalue problem for the graph Laplacian and that the first non-trivial eigenvector of the graph Laplacian is a minimizer. In the context of multi-way clustering, higher eigenvectors of the graph Laplacian are used to define an embedding of the data cloud into a Euclidean space with low dimension. In turn, the embedded data can be clustered using an algorithm like $k$-means, inducing in this way a partition for the original data.  Among the many results in the literature addressing the consistency of spectral clustering (see the Introduction for some references), we highlight the work in \cite{SpecCon} which exploits the variational characterization of eigenvectors/eigenvalues. In that work the graph on the cloud $X_n$ is assumed to be an $\veps$-graph.   
	\label{ExamSpec}
\end{example}

\begin{example}($p$-Laplacian regularization for semi-supervised learning) Let 
\[\{ (\x_1,y_1), \dots , (\x_q,y_q) \}\]
be $q$ labeled data points and let 
\[ \x_{q+1}, \dots, \x_n \]
be a set of unlabeled data points. We think of $n$ as being much larger than $q$.

In \cite{Ramdas} the authors consider the optimization problem
\begin{equation}
\min_{u: X_n \rightarrow \R } \sum_{i,j}w_{ij}| u(\x_i) - u(\x_j)|^p  +  \frac{1}{2\gamma^2}\sum_{i=1}^q |u(\x_i)- y_i|^2
\label{semip}
\end{equation}
for the purposes of semi-supervised learning; here $p$ is a user chosen parameter, whose role is to impose regularity on candidate functions $u$ (the higher $p$ the more regular the functions will be). In two independent contributions \cite{Calder,ThorpeSlepcev} the authors study the large data limit of this variational problem and by studying the discrete regularity induced by the $L^p$ term in \eqref{semip}, they are able to rigurously show that there is a phase transition in the value of $p$ at which solutions to \eqref{semip} ``stop forgetting" the $q$ labeled data points; the transition occurs at $p> m$ (where $m$ is the intrinsic dimension of the $x$ data set), a result that is reminescent to Sobolev's embedding theorem. The setting in which these results are shown is that of $\veps$-graphs.
	
	\label{Examplesemi}
\end{example}

\begin{example}(Bayesian formulation of semi-supervised learning) In the context of Example \ref{Examplesemi}, a related optimization problem is  
\[ \min_{u: X_n \rightarrow \R } \langle \Delta_n^{\alpha} u , u \rangle +  \frac{1}{2\gamma^2}\sum_{i=1}^q |u(\x_i)- y_i|^2  ,\]
where $\Delta_n$ is the graph Laplacian associated to the graph $(w_{ij})_{ij}$ and where $\alpha$ is a positive number whose role is to enforce more or less regularity on a candidate function (higher $\alpha$ resuls in more regularity). The minimizer of this functional can be seen as the MAP of the \textit{posterior } distribution
\[ p(u|y) \propto \exp(- \phi(u; y) ) d \pi(u)  \]
where $\pi$ is a \textit{prior} distribution (in this case Gaussian with covariance matrix $\Delta_n^{-\alpha}$) and $\phi$ is a \textit{negative log-likelihood} model (in this case additive Gaussian noise). 

This Bayesian point of view to graph based semi-supervised learning was introduced in \cite{BertozziStuart}. In \cite{GraphBayes} the authors study the passage to the large $n \rightarrow \infty$ limit ($q$ fixed) and study the consistency of posterior measures. Moreover, from said consistency result and from the properties of the limiting posterior distribution, the authors in \cite{GTASK} provide some theory supporting the MCMC algorithm to sample from the posterior $p(u|y)$ that was proposed in \cite{BertozziStuart} . This algorithm was introduced so as to alleviate the curse of dimensionality when sampling from $p(u|y)$ (here the curse of dimensionality arises from the large number $n$). The setting in which all these results are shown is that of $\veps$-graphs.

\end{example}
\nc

\subsection{Preliminaries}
\label{Prelim}
The purpose of this subsection is to present some definitions and preliminary results that we use in the proof of our main theorems. In particular we present the definitions of $TL^1$-space and $\Gamma$-convergence.

$TL^1(D)$ is defined as the set of all pairs $(\mu, u  )$ where $\mu$ is a Borel probability measure on $D$ and $u$ is a function in $L^1(D,\mu)$ (written $L^1(\mu)$ from now on). 
This set can be endowed with the $TL^1$-metric defined by
\[ d_{TL^1}((\mu_1, u_1), (\mu_2, u_2) ):= \inf_{\pi \in \Gamma(\mu_1, \mu_2)} \left\{  \int_{D\times D}\lvert x-y \rvert d \pi(x,y) + \int_{D \times D} \lvert u_1(x) - u_2(y) \rvert d \pi(x,y) \right\}, \]
where $\Gamma(\mu_1, \mu_2)$ stands for the set of couplings between $\mu_1$ and $\mu_2$, that is, the set of all probability measures on the product $D\times D$ whose first and second marginals are $\mu_1$ and $\mu_2$ respectively.

Given that with probability one the empirical measure $\nu_n$ converges weakly to $\nu$ as $n \rightarrow \infty$, and given that $\nu$ has a density, we may use the characterization of $TL^1$-convergence from Proposition 3.12 
in \cite{GarciaTrillos2015} to conclude that $\{ (\nu_n, u_n) \}_{n \in \N}$ converges to $(\nu,u)$ in $TL^1$ if and only if \textit{there exists} a sequence of transportation maps 
$\{ T_n\}_{n\in \N}$ with ${T_n}_{\sharp} \nu = \nu_n$ ($T_n$ pushes forward $\nu$ into $\nu_n$), satisfying 
\begin{equation}
 \lim_{n \rightarrow \infty} \int_{D} \lvert T_n(x) - x \rvert d \nu(x) =0
 \label{Stag}
 \end{equation}
and 
\begin{equation}
 \lim_{n \rightarrow \infty} \int_{D} \lvert u_n(T_n(x)) - u(x) \rvert d \nu(x) =0. 
\label{Convrg}
\end{equation}
In turn, this holds if and only if \textit{for all} sequences of transportation maps with ${T_n}_{\sharp} \nu = \nu_n$ that satisfy \eqref{Stag} one has \eqref{Convrg}. Because of this characterization, 
we may abuse notation slightly and simply write $u_n \converges{TL^1} u$ without specifying the corresponding attached measures whenever it is clear from context that they can be omitted. 

A special choice of transportation maps between $\nu$ and $\nu_n$ that we use in the remainder are provided by \cite{W8L8}. We may consider transportation maps $T_n : D \rightarrow X_n$ between the measures $\nu$ and $\nu_n$ satisfying the condition 
\begin{equation}
 \lVert Id - T_n \rVert_{\infty}  \leq \frac{C (\log(n))^{1/d}}{n^{1/d}}=: \mathcal{\delta}_n,  
\label{Tn}
\end{equation}
for some constant $C$ (provided $d\geq 3$). Indeed, the results in \cite{W8L8} show that with probability one, 
there exists transportation maps $\left\{ T_n \right\}_{n \in \N}$ for which $T_{n \sharp} \nu = \nu_n$ and for which \eqref{Tn} holds for all large enough $n\in \N$. Since all of our results are asymptotic in nature, 
we may as well assume for the remainder that with probability one \eqref{Tn} holds for all $n \in \N$. One last relevant property of the map $T_n$, which follows directly from the fact that it transports $\nu$ into $\nu_n$, 
is the change of variables formula 
\begin{equation}
\int_{D} f(x) d \nu_n(x)=\int_{D}  f( T_n(x))d \nu(x) , 
\label{chvar}
\end{equation}
which allows us to write integrals with respect to $\nu_n$ in terms of integrals with respect to $\nu$.

We now present the definition of $\Gamma$-convergence in the context of a general metric space. This is a notion of convergence for functionals which together with a coercivity assumption, guarantee the stability of minimizers in the limit. A standard reference for $\Gamma$-convergence is \cite{DalMaso}.

\begin{definition} \label{def:Gamma}
Let $(\mathcal{X}, d_{\mathcal{X}})$ be a metric space and let  $F_n: \mathcal{X} \rightarrow[0,\infty] $ be a sequence of functionals. The sequence $\left\{ F_n \right\}_{n \in \mathbb{N}} $  $ \Gamma$-converges with respect to the metric  $d_{ \mathcal{X}}$ to the functional $F: \mathcal{X} \rightarrow  [0, \infty]$ as $n \rightarrow \infty$ 
if the following properties hold:
\begin{itemize}
\item \textbf{Liminf inequality:} For every $x \in \mathcal{X}$ and every sequence $\left\{ x_n \right\}_{n \in \mathbb{N}}$ converging to $x$,
\begin{equation*}
\liminf_{n \rightarrow \infty} F_n(x_n) \geq F(x),
\end{equation*}
\item  \textbf{Limsup inequality:} For every $x \in \mathcal{X}$ there exists a sequence $\left\{ x_n \right\}_{n \in \N}$ converging to $x$ satisfying
\begin{equation*}
\limsup_{n \rightarrow \infty} F_n(x_n) \leq F(x).
\end{equation*}
\item \textbf{Compactness:} Every bounded sequence $\{ x_n \}_{n \in \N}$ satisfying
\[  \sup_{n \in \N} F_n(x_n) < \infty \]
is precompact. 
\end{itemize}
We say that $F$ is the $\Gamma$-limit of the sequence of functionals $\left\{F_n \right\}_{n \in \N}$ (with respect to the metric $d_\mathcal{X}$). 
\label{defGamma}
\end{definition} 

\begin{remark}
A sequence $\{ x_n\}_{n \in \N} \subseteq \mathcal{X}$ like the one appearing in the limsup inequality is said to be a recovery sequence for $x$.  It is straightforward to show that if one can find recovery sequences for all elements in a set $\mathcal{X}'$ satisfying:
\begin{enumerate}
\item For all $x \in \mathcal{X}$ there exists a sequence $\{ x_l \}_{l\in \N} \subseteq \mathcal{X}'$ such that $x_l \rightarrow x$ and $F(x_l) \rightarrow F(x)$, 
\end{enumerate}
then one can find recovery sequences for all elements in $\mathcal{X}$; a set $\mathcal{X}'$ satisfying (1) above is said to be dense in $\mathcal{X}$ with respect to $F$. This fact follows from a  simple diagonal argument (see \cite{DalMaso}).
\label{density} 
\end{remark}

The most relevant property of $\Gamma$-convergence (in particular for our purposes) is presented in the following proposition which can be found in \cite{DalMaso}.
\begin{proposition}
	\label{porpGamma}
Let $(\mathcal{X}, d_{\mathcal{X}})$ be a metric space and let  $F_n: \mathcal{X} \rightarrow[0,\infty] $ be a sequence of functionals that are not identically equal to $\infty$. If $\{ F_n \}_{n \in\N}$ $\Gamma$-converges towards $F$ then, 
\begin{enumerate}
	\item Any sequence $\{ x_n^*\}_{n \in \N}$ where $x_n^*$ is a minimizer of $F_n$ is precompact, and each of its accumulation points is a minimizer of $F$. In particular, if $F$ has a unique minimizer, then $\{ x_n^* \}_{n \in \N}$ converges towards it.	
	\item We have
	\[\lim_{n \rightarrow \infty} \min_{x \in \X} F_n(x) = \min_{x \in \X} F(x).  \]
\end{enumerate}
\end{proposition}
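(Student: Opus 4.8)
This is the classical ``fundamental theorem of $\Gamma$-convergence'' (see \cite{DalMaso}); I will reproduce the argument adapted to the formulation of Definition~\ref{defGamma}, in which the Compactness clause plays exactly the role of the equi-coercivity hypothesis in the usual statement. Assume first that $F$ is not identically $+\infty$ and fix $x_0 \in \mathcal{X}$ with $F(x_0) < \infty$. By the Limsup inequality pick a recovery sequence $\{y_n\}$ with $y_n \to x_0$ and $\limsup_n F_n(y_n) \le F(x_0)$. Since $x_n^*$ minimizes $F_n$ we have $F_n(x_n^*) \le F_n(y_n)$, whence $\limsup_n F_n(x_n^*) \le F(x_0) < \infty$, so $\sup_n F_n(x_n^*) < \infty$. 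The Compactness clause of Definition~\ref{defGamma} then gives that $\{x_n^*\}_{n\in\N}$ is precompact in $\mathcal{X}$.

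Next I will show that every accumulation point of $\{x_n^*\}$ minimizes $F$ and that $\liminf_n (\min_{\mathcal X}F_n) \ge \min_{\mathcal X}F$. Let $x_{n_k}^* \to x^*$ along a subsequence. The Liminf inequality gives
\[ F(x^*) \;\le\; \liminf_{k} F_{n_k}(x_{n_k}^*) \;=\; \liminf_{k}\Big(\min_{\mathcal X} F_{n_k}\Big). \]
On the other hand, for an arbitrary $z \in \mathcal{X}$, choosing a recovery sequence $z_n \to z$ with $\limsup_n F_n(z_n) \le F(z)$ and using $\min_{\mathcal X}F_n \le F_n(z_n)$ yields
\[ \limsup_{n}\Big(\min_{\mathcal X} F_n\Big) \;\le\; \limsup_n F_n(z_n) \;\le\; F(z). \]
Combining the two displays (the first along $\{n_k\}$, the second along all $n$) gives $F(x^*) \le F(z)$ for every $z \in \mathcal X$; hence $x^*$ is a minimizer of $F$, so $\inf_{\mathcal X}F$ is attained and equals $F(x^*)$. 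Taking $z = x^*$ in the second display also gives $\limsup_n(\min_{\mathcal X}F_n) \le \min_{\mathcal X}F$. For the reverse inequality, if some subsequence had $\min_{\mathcal X}F_{n_j} \to \ell < \min_{\mathcal X}F$, I would rerun the first paragraph along $\{n_j\}$ to extract a convergent sub-subsequence of minimizers whose limit $\bar x$ satisfies $F(\bar x) \le \liminf_j F_{n_j}(x_{n_j}^*) = \ell < \min_{\mathcal X}F$, contradicting that $\bar x$ minimizes $F$. Therefore $\liminf_n(\min_{\mathcal X}F_n) \ge \min_{\mathcal X}F$, and together with the upper bound this proves assertion (2).

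Finally, assertion (1): $\{x_n^*\}$ is precompact (first paragraph) and each of its accumulation points minimizes $F$ (second paragraph); if $F$ has a unique minimizer $x^*$, then $x^*$ is the only accumulation point of the precompact sequence $\{x_n^*\}$, and a precompact sequence in a metric space with a single accumulation point converges to it, so $x_n^* \to x^*$. I do not expect a substantial obstacle here — the result is a direct consequence of the three defining properties of $\Gamma$-convergence — the only delicate points being the bookkeeping of subsequences in the second paragraph (not conflating $\liminf$ along a subsequence with $\liminf$ along the full sequence) and the recognition that the Compactness clause is precisely the coercivity needed to upgrade ``uniformly bounded energy'' to ``precompact''.
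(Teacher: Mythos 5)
Your argument is the classical ``fundamental theorem of $\Gamma$-convergence'' and it is essentially correct; the paper itself offers no proof of Proposition~\ref{porpGamma}, deferring to \cite{DalMaso}, so there is nothing to diverge from — your write-up simply supplies the standard argument, with the subsequence bookkeeping in the second paragraph handled correctly ($\liminf$ along a subsequence is bounded above by $\limsup$ along the full sequence, which is what you need to chain the two displays).

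One step deserves a flag. The Compactness clause in Definition~\ref{defGamma} applies to \emph{bounded} sequences with uniformly bounded energy, whereas you invoke it after establishing only $\sup_n F_n(x_n^*) < \infty$; you never verify that $\{x_n^*\}$ is bounded in $(\mathcal{X}, d_{\mathcal{X}})$, and this does not follow from the stated hypotheses (take $\mathcal{X} = \R$, $F_n(x) = \min\{1, |x-n|\}$: each $F_n$ is minimized at $x_n^* = n$, the liminf, limsup and compactness clauses all hold with $F \equiv 1$, yet $\{x_n^*\}$ escapes to infinity). This looseness is inherited from the proposition as stated — in \cite{DalMaso} the hypothesis is equi-coercivity, and in the applications in this paper boundedness is checked separately (e.g.\ the minimizers of \eqref{CheegerCut2} are indicator functions, hence uniformly bounded in $L^1(\nu_n)$) — but your proof should either add ``and $\{x_n^*\}$ is bounded'' as a hypothesis or note explicitly where it is used. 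The same remark applies to your extraction of a convergent sub-subsequence in the proof of assertion~(2). Your opening assumption that $F \not\equiv +\infty$ is likewise not a consequence of the stated hypotheses (only the $F_n$ are assumed proper), so state it as a standing assumption rather than with ``assume first,'' which promises a second case you never treat.
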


\nc

We have presented the notion of $\Gamma$ convergence in the above generality because some of the $\Gamma$-limits we will consider in this paper are taken in the context of the metric space $L^1(D)$ 
whereas others are taken in the context of the metric space $TL^1(D)$. Also, notice that when the functionals are allowed to be random (as it is the case for the graph total variation), 
$\Gamma$-convergence has to be interpreted as in the Definition 2.11 in \cite{GarciaTrillos2015}. 

To conclude this section we list two important properties of the weighted total variation functional $TV( \cdot; h)$. 
Let $h : D \rightarrow \R$ be a continuous function which is bounded above and below by positive constants. The first property is a representation formula for $TV(u;h)$ in terms of the distributional derivative of $u$.
Indeed, it follows from the work in \cite{Baldi} that for every $u \in BV(D)$ one can write
\begin{equation}
 TV(u; h) = \int_{D} h(x) d \lvert Du \rvert(x),
\label{repformula}
\end{equation}
where in the above $Du$ stands for the distributional derivative of $u$ (which in general is a signed measure) and $\lvert Du \rvert $ stands for the total variation measure associated to $Du$. 

The second property that is relevant for our purposes is the \textit{coarea formula} which states that for every $u \in BV(D)$,
\begin{equation}
 TV(u;h) = \int_{-\infty}^{\infty} TV(\mathds{1}_{\{ x  \: : \:  u(x) >t \}} ; h) dt.
\label{coarea}
\end{equation}
This formula says that the total variation of $u$ can be written in terms of the total variation of its level sets.  See Theorem 13.25 in \cite{Leoni}.

\section{Proofs of Theorems}
\label{Proofs}

The main auxiliary result that we need to establish Theorem \ref{Main} is the following.

\begin{proposition}

For every $n \in \N$ let  $\veps_n, \hat{\veps}_n: D \rightarrow (0,\infty)$ be functions satisfying 
\[ \sup_{x \in D} \left \lvert  \frac{\veps_n(x)}{\hat{\veps}_n(x)} -1  \right \rvert \rightarrow   0 \quad, \text{ as } n \rightarrow \infty,   \]
and
\[  \sup_{x \in D}   \veps_n(x)    \rightarrow 0 , \text{ as } n \rightarrow \infty.  \]
For $n\in \N$ consider the energy $F_n : L^1(D) \rightarrow [0,\infty)$ defined by
\[ F_n(u) : = \int_{D} \hat{f}_n(x)\left( \int_{D} \eta_{\hat{\veps}_n(x) }(x-y) \lvert  u(x) - u(y) \rvert \rho(y) dy\right) \rho(x) dx, \quad u \in L^1(\nu),  \]
where
\[ \hat{f}_n(x) :=  \frac{(\hat{\veps}_n(x))^{d}}{  ( \nu( B(x, \veps_n(x)) )     )^{1+ 1/d}  }.\]
Then, as $n \rightarrow \infty$, $F_n$ $\Gamma$-converges in the $L^1(D)$ sense towards $F$, where $F$ is given by
\[ F(u):=   \frac{\sigma_\eta}{\alpha_d^{1+1/d}} TV(u; \rho^{1-1/d}). \]
\label{Lemma1}
\end{proposition}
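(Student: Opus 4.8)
The plan is to verify the three defining properties of $\Gamma$-convergence in $L^1(D)$ from Definition~\ref{def:Gamma}: the liminf inequality, the limsup inequality, and compactness. The prefactor $\hat f_n$ is engineered exactly to absorb the $x$-dependence of the bandwidth. Since $\rho$ is continuous and $\sup_x\veps_n(x)\to 0$, one has $\nu(B(x,\veps_n(x)))=\alpha_d\,\rho(x)\,\veps_n(x)^d(1+o(1))$ (at each fixed $x$, and uniformly on any $D'$ with $\overline{D'}\subset D$), and combined with $\veps_n(x)/\hat\veps_n(x)\to1$ this gives
\[ \hat f_n(x)\,\rho(x)=\frac{\rho(x)^{-1/d}}{\alpha_d^{1+1/d}\,\hat\veps_n(x)}\,(1+o(1)). \]
On the other hand, for $u\in C^1$ and each $x$, a first-order Taylor expansion, the exact identity $\hat\veps^{-d-1}\int_{B(x,\hat\veps)}\lvert\nabla u(x)\cdot(y-x)\rvert\,dy=\sigma_\eta\lvert\nabla u(x)\rvert$ (with $\sigma_\eta$ as in \eqref{sigma}), and continuity of $\rho$ yield
\[ \int_D \eta_{\hat\veps_n(x)}(x-y)\,\lvert u(x)-u(y)\rvert\,\rho(y)\,dy=\hat\veps_n(x)\,\rho(x)\,\sigma_\eta\lvert\nabla u(x)\rvert+o(\hat\veps_n(x)). \]
Multiplying, the integrand of $F_n(u)$ converges pointwise to $\tfrac{\sigma_\eta}{\alpha_d^{1+1/d}}\rho(x)^{1-1/d}\lvert\nabla u(x)\rvert$, the density of $F(u)$ (cf.\ \eqref{repformula}), the factors $\hat\veps_n(x)$ cancelling; this is the heuristic behind the statement.

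\emph{Limsup inequality.} By Remark~\ref{density} it suffices to construct a recovery sequence for each $u$ in a class dense in $BV(D)$ with respect to $TV(\,\cdot\,;\rho^{1-1/d})$; as $\partial D$ is Lipschitz, the Lipschitz functions on $\overline D$ form such a class (strict approximation in $BV$ together with \eqref{repformula}), and for such $u$ I would take the constant sequence $u_n\equiv u$. The integrand of $F_n(u)$ converges a.e.\ to $\tfrac{\sigma_\eta}{\alpha_d^{1+1/d}}\rho^{1-1/d}\lvert\nabla u\rvert$ by the expansions above, and is dominated uniformly in $n,x$ by $C\lVert\nabla u\rVert_{L^\infty}$: the Lipschitz regularity of $\partial D$ gives an interior cone bound $\lvert B(x,r)\cap D\rvert\ge c\,r^d$, whence $\hat f_n(x)\le C\,\hat\veps_n(x)^{-1}$ (using also $\veps_n\asymp\hat\veps_n$), while $\int_D\eta_{\hat\veps_n(x)}(x-y)\lvert u(x)-u(y)\rvert\,dy\le\alpha_d\lVert\nabla u\rVert_{L^\infty}\hat\veps_n(x)$; dominated convergence gives $F_n(u)\to F(u)$. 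The restriction to indicator functions is treated the same way, reducing by density in perimeter to sets $A$ with smooth boundary and taking the constant sequence $\mathds{1}_A$, for which the analogous computation localizes to a tube of width $\sim\hat\veps_n$ around $\partial A\cap D$.

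\emph{Liminf inequality.} Let $u_n\to u$ in $L^1(D)$ with $L:=\liminf_n F_n(u_n)<\infty$; pass to a subsequence attaining the liminf. Fix $\delta>0$ and $D'$ with $\overline{D'}\subset D$; using the asymptotics for $\hat f_n\rho$ and $\rho(y)=\rho(x)(1+o(1))$ for $\lvert x-y\rvert\le\hat\veps_n(x)$, for $n$ large
\[ F_n(u_n)\ge (1-\delta)\,\frac{1}{\alpha_d^{1+1/d}}\int_{D'}\rho(x)^{1-1/d}\,\frac{1}{\hat\veps_n(x)^{d+1}}\!\left(\int_{D'}\mathds{1}_{\{\lvert x-y\rvert<\hat\veps_n(x)\}}\,\lvert u_n(x)-u_n(y)\rvert\,dy\right)\!dx , \]
a nonlocal total-variation functional with a \emph{spatially varying} bandwidth. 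This is the heart of the proof and the essential departure from \cite{GarciaTrillos2015}, where the Bourgain--Brezis--Mironescu/Ponce lower bounds are available in fixed-bandwidth form. I would cover $D'$ by balls $B_i$ of radius comparable to $\hat\veps_n$ on which both $\rho$ and $\hat\veps_n$ are constant up to a factor $1+o(1)$ — the control on $\rho$ being immediate from continuity and $\sup_x\hat\veps_n(x)\to0$, and that on $\hat\veps_n$ using that the bandwidths in question vary slowly at their own scale — and on each $B_i$ bound the integrand below, with an $(1+o(1))^{-d-1}$ loss, by the fixed-bandwidth kernel of radius $\inf_{B_i}\hat\veps_n$, to which the standard $\Gamma$-liminf estimate applies; summing over essentially disjoint $B_i$, refining the mesh, and letting $\delta\downarrow0$, $D'\uparrow D$, gives $u\in BV(D)$ with $F(u)\le L$ via \eqref{repformula}. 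The coarea formula — which both $F_n$ and $F$ satisfy exactly, since $\lvert a-b\rvert=\int_\R\lvert\mathds{1}_{a>t}-\mathds{1}_{b>t}\rvert\,dt$ and \eqref{coarea} — lets this be reduced, if preferred, to indicator functions.

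\emph{Compactness.} If $\{u_n\}$ is bounded in $L^1(D)$ with $\sup_n F_n(u_n)<\infty$, the nonlocal lower bound above is a quantitative $BV$-type estimate, so a Kolmogorov--Riesz argument (controlling $\sup_{\lvert z\rvert\le\tau}\lVert u_n(\,\cdot+z)-u_n\rVert_{L^1(D')}$ by the energy, uniformly in $n$, for each $D'$ with $\overline{D'}\subset D$) gives precompactness in $L^1_{\mathrm{loc}}(D)$, and the accompanying nonlocal Sobolev embedding yields uniform integrability of $\{u_n\}$, hence tightness near $\partial D$ and precompactness in $L^1(D)$; the Lipschitz regularity of $\partial D$ enters here once more. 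In sum, the limsup inequality and compactness are careful adaptations of the fixed-bandwidth theory, while essentially all the difficulty lies in the liminf inequality for the nonlocal energy whose kernel width varies from point to point; the device above — localizing at the scale of $\hat\veps_n$, where $\rho$ and $\hat\veps_n$ are both nearly constant, and invoking the fixed-bandwidth estimate on each piece — is how I would overcome it, with separate attention to $\partial D$ in the limsup and compactness steps.
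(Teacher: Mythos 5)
Your heuristic expansion and your limsup argument are essentially the paper's: the paper likewise takes the constant recovery sequence for $u$ in a dense class (it uses restrictions of $C_c^\infty(\R^d)$, you use Lipschitz functions — both fine), and your interior-cone domination plus DCT is a correct alternative to the paper's Taylor-expansion-plus-uniform-convergence computation. Your compactness sketch is a reasonable bonus; the paper proves compactness separately and doesn't include it in Proposition~\ref{Lemma1}.

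The genuine gap is in your liminf argument, and it concerns the scale of the covering. You propose to cover $D'$ by balls $B_i$ of radius ``comparable to $\hat\veps_n$'' and then apply the ``standard $\Gamma$-liminf estimate'' on each, after freezing the bandwidth to $\inf_{B_i}\hat\veps_n$. But the Bourgain--Brezis--Mironescu/Ponce lower bound is an asymptotic statement for a \emph{fixed} domain $B$ as the bandwidth tends to zero relative to $B$; on a ball whose radius is of the same order as the bandwidth there is no such estimate — the nonlocal energy at a single scale carries no total-variation content about the $L^1$ limit, and ``refining the mesh'' has no meaning when the mesh already moves with $n$. What actually makes the argument work (and is what the paper does in Steps~2 and 3) is a two-parameter construction: a family $\mathcal{F}_l$ of finitely many pairwise disjoint \emph{$n$-independent} closed balls in $D$, built via De Giorgi's structure theorem to exhaust $\lvert Du\rvert$ as $l\to\infty$ while having $\lvert Du\rvert$-null boundaries; on each such fixed $B\in\mathcal{F}_l$ one sends $n\to\infty$ first and only then $l\to\infty$. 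Moreover the paper handles the variable bandwidth more cleanly than a kernel-freezing comparison: it observes (Step~2) that since $\sup_x\hat\veps_n(x)\to0$, Theorem~8 of \cite{Ponce} carries through with the $x$-dependent bandwidth essentially verbatim, yielding $\liminf_n\int_B\hat\veps_n(x)^{-1}\!\int_B\eta_{\hat\veps_n(x)}(\lvert x-y\rvert)\lvert u_n(x)-u_n(y)\rvert\,dy\,dx\ge\sigma_\eta TV_B(u)$ directly, without the $(1+o(1))^{-d-1}$ loss you invoke. Finally, you silently take the asymptotics $\nu(B(x,\veps_n(x)))=\alpha_d\rho(x)\veps_n(x)^d(1+o(1))$ with only continuity of $\rho$; this is fine for the liminf (a local, pointwise statement), but the paper deliberately routes both inequalities through an intermediate Lemma~\ref{LemmaAux} stated for Lipschitz densities $\rho_1,\rho_2$, then sandwiches $\rho$ by inf-/sup-convolutions $\rho_{1,s}\nearrow\rho$, $\rho_{2,s}\searrow\rho$ and — for the limsup — modifies $\rho_{1,s}$ with a cutoff near $\partial D$ so that $\int_{B(x,\veps_n(x))}\rho_{1,s}\le\nu(B(x,\veps_n(x)))$ uniformly up to the boundary, a point you do not address.
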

Proposition \ref{Lemma1} is a consequence of the next lemma.
\begin{lemma}
Let $\rho_1,\rho_2: \R^d \rightarrow (0,\infty)$ be two Lipschitz continuous functions which are bounded above and below by positive constants in $D$. Let $\mu$ be the Borel measure on $\R^d$ given by $d \mu(x) = \rho_2(x) dx$. 
Let  $\veps_n, \hat{\veps}_n: D \rightarrow (0,\infty)$ be functions satisfying 
\[ \sup_{x \in D} \left \lvert  \frac{\veps_n(x)}{\hat{\veps}_n(x)} -1  \right \rvert \rightarrow   0 \quad, \text{ as } n \rightarrow \infty,   \]
and
\[  \sup_{x \in D}   \veps_n(x)    \rightarrow 0 , \text{ as } n \rightarrow \infty.  \]
Let $F_n : L^1(D) \rightarrow [0,\infty)$ be defined by
\[ F_n(u) : = \int_{D} \hat{f}_n(x) \left( \int_{D} \eta_{\hat\veps_n(x) }(x-y) \lvert  u(x) - u(y) \rvert \rho_1(y) dy\right) \rho_1(x) dx, \quad u \in L^1(\nu),  \]
where
\[ \hat{f}_n(x) :=  \frac{(\hat{\veps}_n(x))^{d}}{  ( \mu( B(x, \veps_n(x)) )     )^{1+ 1/d}  }.\]
Then, as $n \rightarrow \infty$, $F_n$ $\Gamma$-converges in the $L^1(D)$ sense towards 
\[ F(u):=   \frac{\sigma_\eta}{\alpha_d^{1+1/d}} TV\left(u; \frac{\rho_1^2}{\rho_2 ^{1+1/d} }\right). \]
\label{LemmaAux}
\end{lemma}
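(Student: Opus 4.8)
The plan is to prove the two $\Gamma$-convergence inequalities after a preliminary normalization that removes the higher-order part of $\hat f_n$.

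\textbf{Reduction.} Since $\rho_2$ is Lipschitz and bounded below by a positive constant, $\mu(B(x,r))=\alpha_d\,\rho_2(x)\,r^d\,(1+O(r))$ with the error uniform in $x$; combined with $\sup_x\lvert\veps_n(x)/\hat\veps_n(x)-1\rvert\to0$ and $\sup_x\veps_n(x)\to0$ (which also forces $\sup_x\hat\veps_n(x)\to0$), this gives
\[ \hat f_n(x)=\frac{1+\theta_n(x)}{\alpha_d^{1+1/d}\,\rho_2(x)^{1+1/d}\,\hat\veps_n(x)},\qquad \sup_{x\in D}\lvert\theta_n(x)\rvert\longrightarrow0. \]
Writing $\widetilde F_n$ for the functional obtained by replacing $\hat f_n$ with $\bigl(\alpha_d^{1+1/d}\rho_2^{1+1/d}\hat\veps_n\bigr)^{-1}$, one has $\lvert F_n(u)-\widetilde F_n(u)\rvert\le\bigl(\sup_x\lvert\theta_n(x)\rvert\bigr)\widetilde F_n(u)$, so $F_n$ and $\widetilde F_n$ have the same $\Gamma$-limit (if any); I will work with
\[ \widetilde F_n(u)=\frac{1}{\alpha_d^{1+1/d}}\int_D\int_D\frac{\rho_1(x)\rho_1(y)}{\rho_2(x)^{1+1/d}}\,\frac{1}{\hat\veps_n(x)^{d+1}}\,\eta\!\Bigl(\tfrac{x-y}{\hat\veps_n(x)}\Bigr)\,\lvert u(x)-u(y)\rvert\,dy\,dx. \]

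\textbf{Limsup inequality.} As the $\Gamma$-limit is taken in $L^1(D)$, the natural recovery sequence for $u$ is the constant one $u_n\equiv u$, and by Remark~\ref{density} it suffices to produce recovery sequences on a set dense with respect to $F$; I take this set to be $C^\infty(\overline D)$. For $u\in BV(D)$ the representation formula \eqref{repformula} together with continuity and positivity of $h:=\rho_1^2/\rho_2^{1+1/d}$ yields $u_\ell\in C^\infty(\overline D)$ with $u_\ell\to u$ in $L^1$ and $TV(u_\ell;h)\to TV(u;h)$ (strict approximation); for $u\notin BV(D)$ one has $F(u)=+\infty$ (as $h$ is bounded below, $TV(u;h)\ge(\min h)\,TV(u)$), so any sequence works. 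For $u\in C^\infty(\overline D)$ and $x$ at distance $>\hat\veps_n(x)$ from $\partial D$, the substitution $z=(x-y)/\hat\veps_n(x)$ and a Taylor expansion give
\[ \int_D\frac{1}{\hat\veps_n(x)^{d+1}}\eta\!\Bigl(\tfrac{x-y}{\hat\veps_n(x)}\Bigr)\rho_1(y)\lvert u(x)-u(y)\rvert\,dy=\rho_1(x)\lvert\nabla u(x)\rvert\,\sigma_\eta+O(\hat\veps_n(x)), \]
uniformly in $x$, using $\lVert u\rVert_{C^2},\Lip(\rho_1)<\infty$, $\supp\eta\subseteq B(0,1)$, and the identity $\int_{\R^d}\eta(z)\lvert v\cdot z\rvert\,dz=\sigma_\eta$ for unit vectors $v$ (the remark following \eqref{sigma}). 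The $O(\sup_x\hat\veps_n)$-thin boundary layer contributes negligibly, and multiplying by $\rho_1(x)/(\alpha_d^{1+1/d}\rho_2(x)^{1+1/d})$ and integrating gives $\widetilde F_n(u)\to\tfrac{\sigma_\eta}{\alpha_d^{1+1/d}}\int_D\tfrac{\rho_1^2}{\rho_2^{1+1/d}}\lvert\nabla u\rvert\,dx=F(u)$.

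\textbf{Liminf inequality (the main difficulty).} Let $u_n\to u$ in $L^1(D)$ with $\sup_n\widetilde F_n(u_n)<\infty$. I would localize: partition $D$ into finitely many Borel sets $D_i$ of diameter $<\delta$ with $\lvert Du\rvert(\partial D_i)=0$; freezing $\rho_1,\rho_2$ on each $D_i$ (by continuity) and using that $\eta(\,\cdot/\hat\veps_n(x))$ is supported in $\lvert x-y\rvert<\hat\veps_n(x)\le\sup_x\hat\veps_n\to0$, the problem reduces to showing, for each $D_i$ lying well inside $D$,
\[ \liminf_{n\to\infty}\int_{D_i}\int_{\R^d}\frac{1}{\hat\veps_n(x)^{d+1}}\,\eta\!\Bigl(\tfrac{x-y}{\hat\veps_n(x)}\Bigr)\,\lvert u_n(x)-u_n(y)\rvert\,dy\,dx\;\ge\;\sigma_\eta\,\lvert Du\rvert(D_i); \]
summing over $i$ and letting $\delta\to0$ (with lower semicontinuity of $TV(\,\cdot\,;h)$, and the boundary layer shrinking with $\delta$) then gives $\liminf_n\widetilde F_n(u_n)\ge\tfrac{\sigma_\eta}{\alpha_d^{1+1/d}}\int_D h\,d\lvert Du\rvert=F(u)$, in particular $u\in BV(D)$ when the left side is finite. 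On a piece $D_i$ small enough that $\hat\veps_n$ is, uniformly in $n$, within a factor $1+o_\delta(1)$ of a constant, the displayed functional differs by a factor $1+o_\delta(1)$ from a translation-invariant nonlocal functional whose $\Gamma$-convergence with the sharp constant $\sigma_\eta$ is provided by \cite{GarciaTrillos2015} (ultimately by the slicing argument in \cite{Ponce}), and transferring that bound back closes the estimate. The heart of the matter --- and where the analysis departs from the fixed-bandwidth setting of \cite{GarciaTrillos2015} --- is controlling the inhomogeneous bandwidth $\hat\veps_n(x)$ in this last reduction, along with the asymmetry of the kernel in $x,y$ and the bookkeeping of the weights; recovering the \emph{sharp} constant $\sigma_\eta$, rather than the strictly smaller dimensional average that a crude divergence-based lower bound would produce, is precisely what forces the localization/slicing.
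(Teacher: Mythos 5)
Your high-level skeleton matches the paper's: first normalize $\hat f_n$ by expanding $\mu(B(x,\veps_n(x)))$ to leading order (the paper calls this $E_n$, you call it $\widetilde F_n$; the tiny difference that you keep $\rho_1(y)$ while the paper also replaces it by $\rho_1(x)$ is cosmetic), and your limsup computation via the constant recovery sequence and a Taylor expansion of $u\in C^\infty$ is essentially the paper's.

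The liminf argument, however, has a genuine gap. Your reduction to the fixed-bandwidth Ponce/\cite{GarciaTrillos2015} result is predicated on the step ``on a piece $D_i$ small enough that $\hat\veps_n$ is, \emph{uniformly in $n$}, within a factor $1+o_\delta(1)$ of a constant.'' Nothing in the hypotheses of Lemma~\ref{LemmaAux} gives you this: the only assumptions on $\veps_n,\hat\veps_n$ are $\sup_x\lvert\veps_n(x)/\hat\veps_n(x)-1\rvert\to0$ and $\sup_x\veps_n(x)\to0$, with no modulus of continuity of $x\mapsto\hat\veps_n(x)$ whatsoever (e.g.\ $\hat\veps_n(x)=\veps_n(x)=n^{-1/(2d)}\bigl(1+\tfrac12\sin(n^2x_1)\bigr)$ satisfies both hypotheses but is not within $1+o_\delta(1)$ of a constant on any fixed small piece, uniformly in $n$). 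Near-constancy \emph{does} happen to hold when the lemma is later applied in the proof of Theorem~\ref{Main} (there $\veps_n(x)$ is essentially $(\alpha_d\rho(x))^{-1/d}(k_n/n)^{1/d}$, continuous in $x$), but that extra structure is not available at the level of the abstract lemma. The paper avoids this issue altogether: in Step~2 of its proof it invokes Theorem~8 of \cite{Ponce} \emph{directly with the variable bandwidth} $\hat\veps_n(x)$, observing that the slicing argument there only needs $\sup_x\hat\veps_n(x)\to0$ and requires no near-constancy in $x$. Your route would also need the pieces $D_i$ to be regular enough for the Ponce-type lower bound; the paper is careful here too, localizing on disjoint closed balls (built in the appendix via De Giorgi's structure theorem) and passing from indicator functions to general $u\in BV$ by the coarea formula rather than attempting a general Borel partition.
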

\begin{proof}

Let us start by proving the liminf inequality. That is, let us prove that for every $u \in L^1(D)$ and every sequence $\{ u_n \}_{n \in \N}\subseteq L^1(D)$ satisfying $u_n \converges{L^1(D)} u$, we have
\begin{equation} 
\liminf_{n \rightarrow \infty} F_n(u_n) \geq F(u). 
\label{LiminfineqG}
\end{equation}

The following simplifications are standard. First, working along subsequences we may assume without the loss of generality that the liminf is actually a limit. In addition, we may assume that both of the terms involved in
inequality \ref{LiminfineqG} are finite. We now split the proof of \eqref{LiminfineqG} into several steps.

\textbf{Step 1:} Instead of working with the energy $F_n$ directly, we first consider a simpler related energy $E_n$ defined by
\[ E_n(v):=\int_{D} \left(  \frac{1}{\hat{\veps}_n(x)}  \int_{D}  \eta_{\hat\veps_n(x)}(\lvert x-y  \rvert) \lvert v(x) - v(y) \rvert  dy\right) g(x) dx   , \quad v \in L^1(D),\] 
where 
\[ g(x):= \frac{(\rho_1(x))^2}{(\alpha_d\rho_2(x))^{1+1/d}},  \quad x \in D.\]
Notice that for every $x \in D$ we have
\[ \left \lvert \mu(B(x, \hat{\veps}_n(x)))  - \alpha_d \rho_2(x) (\hat{\veps}_n(x)) ^d \right \rvert \leq \alpha_d \Lip(\rho_2) (\hat{\veps}_n(x))^{d+1}.    \] 
From the previous inequality, the Lipschitz continuity of $\rho_1, \rho_2$ and the assumptions on $\hat{\veps}_n(\cdot)$ and $\veps_n(\cdot)$, we deduce that for every $x \in D$ and $y \in B(x, \hat{\veps}_n(x))$, 
\[ \left \lvert    \frac{g(x)}{\hat{\veps}_n(x)} - \hat{f}_n(x)\rho_1(x)\rho_1(y)  \right \rvert \leq C \sup_{z \in D} \veps_n(z),   \]
where $C$ is a constant that does not depend on $x$ or $y$. It then follows that
\[  \lim_{n \rightarrow \infty}  \lvert F_n(u_n) -  E_n(u_n)  \rvert =0 .\]
In particular, to obtain \eqref{LiminfineqG} we may as well show that
\[  \liminf_{n \rightarrow \infty} E_n(u_n) \geq F(u).  \]


\textbf{Step 2:} 
Let $B$ be a closed ball contained in $D$. We claim that 
\[  \liminf_{n \rightarrow \infty}  \int_{B} \left(  \frac{1}{\hat{\veps}_n(x)}  \int_{B}  \eta_{\hat\veps_n(x)}(\lvert x-y  \rvert) \lvert u_n(x) - u_n(y) \rvert  dy\right)  dx \geq \sigma_\eta TV_B(u), \]
where $TV_B(u)$ is defined as
\begin{equation} 
TV_B(u) :=  \sup \left\{    \int_{B^\circ} u(x) \divergence(\zeta)dx \: : \: \zeta \in C^\infty_c(B^\circ: \R^d),  \quad \lvert \zeta(x) \rvert \leq 1 \quad \forall x \in B^\circ    \right\},
\label{TVB}
\end{equation}
where in the above $B^\circ $ stands for the interior of the closed ball $B$ (the ball without its boundary).

The claim follows from Theorem 8  in \cite{Ponce}. The only difference in the setting we consider here and the setting considered in \cite{Ponce} 
is that in our case the length-scale $\hat{\veps}_n$ depends on location $\hat{\veps}_n:= \hat{\veps}_n(x)$. The fact 
that $\hat{\veps}_n(x)$ converges to zero uniformly over $x \in D$ as $n \rightarrow \infty$ is enough to make the proof in \cite{Ponce} carry through with essentially no modifications. 
We remark that the arguments in \cite{Ponce} were also used in \cite{GarciaTrillos2015}; in \cite{GarciaTrillos2015} the presence of a non-constant density makes computations a bit more tedious.

\textbf{Step 3:} Suppose that $u \in BV(D)$ is of the form $u = \mathds{1}_A$ for some measurable set $A\subseteq D$. That is, suppose that $u$ is the indicator function of a set with finite perimeter (with respect to $D$).
In the Appendix we show the following  fact: there exists a sequence $\{ \mathcal{F}_l \}_{l \in \N}$ of collections of closed balls contained in $D$ satisfying:
\begin{enumerate}
\item For every $l \in \N$, the family $\mathcal{F}_l$ is finite.
\item For every $l \in \N$, the balls $B:=\overline{B(x_B, r_B)}$ in $\mathcal{F}_l$ are pairwise disjoint. 
\item $ \lim_{l \rightarrow \infty}  \max \{ r_B \: : \: B \in \mathcal{F}_l \}    =0 $.
\item For every $l \in \N$, and for every $B \in \mathcal{F}_l$,  $\lvert  Du \rvert(\partial B) =0$.
\item $ \lim_{l\rightarrow \infty}   \lvert D u \rvert \left( \bigcup_{B \in \mathcal{F}_l} B \right)  =  \lvert D u \rvert(D)$.
\end{enumerate} 
With the families $\{ \mathcal{F}_l \}_{l \in \N}$ at hand, we may now consider the functions  
\[  g_l(x) := \begin{cases}  \min_{y \in B} g(y) , \text{ if } \quad x \in B, \quad B \in \mathcal{F}_l  \\ 0 , \quad \text{otherwise}.    \end{cases}  \]   
Then, for every $l \in \N$
\begin{align*}
\begin{split}
   \liminf_{n \rightarrow \infty  }  E_n(u_n) & \geq \liminf_{n \rightarrow \infty} \sum_{B \in \mathcal{F}_l}  \int_{B} \left(  \frac{1}{\hat{\veps}_n(x) }  \int_{B} \eta_{\hat{\veps}_n(x)}( \lvert x-y\rvert) \lvert  u_n(x) - u_n(y) \rvert   dy  \right)  g(x)  dx  
\\& \geq \sum_{B \in \mathcal{F}_l} \liminf_{n \rightarrow \infty}  \int_{B} \left(  \frac{1}{\hat{\veps}_n(x) }  \int_{B} \eta_{\hat{\veps}_n(x)}( \lvert x-y\rvert) \lvert  u_n(x) - u_n(y) \rvert   dy   \right)  g(x)  dx 
\\& \geq \sum_{B \in \mathcal{F}_l} g_l(x_B) \liminf_{n \rightarrow \infty}  \int_{B} \left(  \frac{1}{\hat{\veps}_n(x) }  \int_{B} \eta_{\hat{\veps}_n(x)}( \lvert x-y\rvert) \lvert  u_n(x) - u_n(y) \rvert   dy \right)    dx
\\&  \geq \sum_{B \in \mathcal{F}_l} g_l(x_B) \sigma_\eta  TV_B(u)  
\\& = \sum_{B \in \mathcal{F}_l} g_l(x_B) \sigma_\eta  \int_{B} d\lvert  Du \rvert(x)
\\&=\sigma_\eta \int_{D} g_l(x) d \lvert  D u\rvert(x),
\end{split}
\end{align*}
where in the fourth inequality we used Step 2 and in the first equality we used Lemma 15.12 in \cite{Maggi} combined with property (4) for the balls in $\mathcal{F}_l$. 

Finally, from properties (3) and (5) of $\{\mathcal{F}_l \}_{l \in \N}$ and from the Lipschitz continuity of $g$ we know that 
\[ \lim_{l \rightarrow \infty} g_l(x)  = g(x) , \quad  \text{ for }  \lvert Du \rvert \text{- a.e. }   x \in D. \]
The dominated convergence theorem implies that 
\[ \lim_{l \rightarrow \infty} \int_{D} g_l(x) d \lvert  D u\rvert(x) = \int_{D} g(x)  d \lvert  D u\rvert(x)  =  TV\left( u ; g \right), \]
where in the last equality we have used the representation formula \eqref{repformula}. We conclude that for functions $u=\mathds{1}_A\in BV(D)$ (i.e. indicator functions for sets of finite perimeter) inequality \eqref{LiminfineqG} holds.

\textbf{Step 4:} In order to prove \eqref{LiminfineqG} for general $u \in BV(D)$ we use Step 3 and the coarea formula for the energies $E_n$ and $F$. Notice that the coarea formula for $F$ is given in \eqref{coarea}, whereas the coarea formula for $E_n$ follows directly from the identity
\[ \lvert a-b \rvert  = \int_{-\infty}^\infty \lvert \mathds{1}_{a> t} - \mathds{1}_{b>t}  \rvert  dt , \quad \forall a , b \in \R.\]

If $u_n \converges{L^1(D)} u $, then for 
Lebesgue-a.e. $t \in \R$ we have 
\[  \mathds{1}_{\{ u_n > t \}}  \converges{L^1(D)}  \mathds{1}_{\{ u >t  \}}, \quad \text{as } n \rightarrow \infty.\] 
We may then apply Step 3 to conclude that for almost every $t \in \R$
\[  \liminf_{n \rightarrow  \infty} E_n( \mathds{1}_{\{ u_n > t \}} ) \geq F(\mathds{1}_{\{ u > t \}}).\]
Integrating the above inequality with respect to $t$, and using Fatou's lemma and the coarea formulas for the energies $E_n$ and $F$, we deduce that
\[  \liminf_{n \rightarrow \infty}E_n(u_n) = \liminf_{n \rightarrow \infty} \int_{-\infty}^\infty  E_n( \mathds{1}_{\{ u_n > t \}} ) dt \geq  \int_{-\infty}^\infty   \liminf_{n \rightarrow \infty} E_n( \mathds{1}_{\{ u_n > t \}} ) dt    \geq  \int_{-\infty}^\infty    E( \mathds{1}_{\{ u> t \}} ) dt   = F(u).\]

Having established the liminf inequality we now proceed to establish the limsup inequality. From remark \ref{density} and the density of $C_c^\infty(\R^d) $ functions in $L^1(D)$ with respect to $TV$ (see Proposition 2.4 in \cite{GarciaTrillos2015}), it is enough to find a recovery sequence for every $u$ obtained as the restriction to $D$ of a function in $ C^\infty_c(\R^d) $.  We actually show that for every $u \in C_c^\infty(\R^d)$
\begin{equation}
 \limsup_{n \rightarrow \infty} F_n(u)  \leq F(u). 
 \label{fixedu}
\end{equation}
In other words $u_n:= u$ for all $n$, is a recovery sequence for $u$. To prove this, notice that 
\begin{align*}
E_{n}(u) &=  \int_{D} \left( \frac{1}{\hat \veps_n(x)}\int_{ B(x, \hat\veps_n(x)) \cap D}\eta_{\hat\veps_n(x)}(x-y) |u(x)-u(y)|dy \right) g(x)dx
\\& \leq \int_{D} \left( \frac{1}{\hat \veps_n(x)}\int_{ B(x, \hat\veps_n(x)) }\eta_{\hat\veps_n(x)}(x-y) |u(x)-u(y)|dy \right) g(x)dx
\\& \leq \int_{0}^{1}  \int_{D} \left( \frac{1}{\hat\veps_n(x)} \int_{B(x,  \hat\veps)} \eta_{\hat\veps_n(x)}(x-y) |\nabla u(x+t(y-x)) \cdot (x-y)| dy \right)g(x)dx dt,
\end{align*}
which follows from the fundamental theorem of calculus. Now, for every fixed $x \in D$, we use the change of variables $h:= \frac{y-x}{\hat \veps_n(x)}$ in the above expression 
to rewrite \[  \frac{1}{\hat\veps_n(x)} \int_{B(x,  \hat\veps(x))} \eta_{\hat\veps_n(x)}(x-y) |\nabla u(x+t(y-x)) \cdot (x-y)| dy  = \int_{B(0, 1)} \eta(h) |\nabla u(x+t\hat{\veps}_n(x) h ) \cdot h | dh.    \]  
But because $u \in C_c^\infty(\R^d)$ and $\sup_{x \in D} \hat \veps_n(x) \rightarrow 0$ as $n \rightarrow \infty$, it follows that
\[ \lim_{n \rightarrow \infty} \sup_{x \in D} \left \lvert  \int_{B(0, 1)} \eta(h) |\nabla u(x+t\hat{\veps}_n(x) h ) \cdot h | dh -  \int_{B(0, 1)} \eta(h) |\nabla u(x ) \cdot h | dh  \right \rvert =0. \]
Hence, 
\[ \limsup_{n \rightarrow \infty} E_n(u) \leq  \int_{0}^{1}  \int_{D} \left( \int_{B(0,1)} \eta(h) |\nabla u(x) \cdot h| dh \right)g(x)dx dt  = \sigma_\eta TV(u; g). \]
\end{proof}

With the previous lemma at hand we can now establish Proposition \ref{Lemma1}.

\begin{proof}[Proof of Proposition \ref{Lemma1}]
%

We make use of Lemma \ref{LemmaAux} and for that purpose we first need to approximate the function $\rho: D \rightarrow \R$ from above and from below using appropriate Lispchitz functions.  
More precisely, for every $s \in \N$ let
 \[ \tilde{\rho}_{1,s}(x):=   \inf_{y \in D}  \left\{ \rho(y) + s |x-y| \right\}  , \quad \rho_{2,s}(x) := \sup_{y \in D}  \left\{ \rho(y) - s |x-y| \right\}.
  \]
Notice that the functions $\tilde{\rho}_{1,s}, \rho_{2,s}$  are Lipschitz continuous, and as $s \rightarrow \infty$,
\[  \rho_{1,s}(x) \nearrow \rho(x) , \quad \rho_{2,s}(x) \searrow \rho(x), \quad \forall x \in D. \]
For every $s \in\N$ we modify the function $\tilde{\rho}_{1,s}$ slightly so as to create a Lipschitz function $\rho_{1,s}$ that in addition to minorizing $\rho$, is such that for all large enough $n \in \N$ 
\begin{equation} 
\int_{B(x, \veps_n(x))} \rho_{1,s}(y) dy \leq \int_{B(x, \veps_n(x)) \cap D} \rho(y)dy = \nu(B(x, \veps_n(x))), \quad \forall x \in D . 
\label{BoundBelow}
\end{equation}
To achieve this, we use the regularity assumption on the boundary of $D$ as follows. From the fact that $D$ is an open and bounded set with Lipschitz boundary it follows (see \cite{Gris}, Theorem 1.2.2.2) that there exists a cone $\mathcal{C} \subseteq \R^d$ with non-empty interior and vertex at the origin, a family of rotations $\left\{ R_{x} \right\}_{x \in D}$ and a number $1>\zeta>0$ such that for every $x \in D,$
\[  x + R_x(\mathcal{C} \cap B(0,\zeta))  \subseteq D. \] 

For every $s\in \N$, let $\xi_s : \R^d \rightarrow [0,1]$ be a smooth cutoff function satisfying $\xi_s(x)\equiv 1$ in $\{y \in D \: : \: \dist(y, \partial D )>1/s \}$ and $\xi \equiv 0$ in 
$\{y \in D \: : \: \dist(y, \partial D ) <1/2s \}$, and let us define 
\[ \rho_{1,s}(x):= \frac{\lvert B(0,1) \cap \mathcal{C} \rvert}{\lvert B(0,1) \rvert}  \rho_{min} (1 -\xi_s(x) )   +  \tilde{\rho}_{1,s}(x) \xi_s(x), \quad x \in \R^d. \]  

It is clear that the functions $\rho_{1,s}$ are Lispchitz, they minorize $\rho$ and they converge to $\rho$ pointwise. 
In addition, for any fixed $s$, if $n \in \N$ is large enough so that in particular $\sup_{x \in D} \veps_n(x) <\min\{ \zeta, \frac{1}{8s} \}$, then \eqref{BoundBelow} holds. Indeed, notice that if $x \in D$ is $1/{4s}$ units 
away from $\partial D$ then \eqref{BoundBelow} follows directly form the fact that $\rho_{1,s} \leq \rho$; on the other hand, if $x \in D$ is within distance $1/{4s}$ from $\partial D$, we conclude that 
\begin{align*}
\begin{split}
\int_{B(x, \hat{\veps}_n(x))} \rho_{1,s}(y) dy = \lvert \mathcal{C} \cap B(0,1) \rvert  \rho_{min} (\hat{\veps}_n(x))^d  \leq \int_{(x+ R_x(\mathcal{C}))\cap B(x, \hat{\veps}_n(x)) } \rho(y)dy 
\\ \leq \int_{B(x, \hat{\veps}_n(x)) \cap D} \rho(y) dy =\nu( B(x, \hat{\veps}_n(x))).
\end{split}
\end{align*}

Let us now establish the limsup inequality. We introduce the functionals $F_{n,s}: L^1(D) \rightarrow [0, \infty)$ given by
\[ F_{n,s}(u):=   \int_{D} \tilde{f}_{n,s}(x) \left( \int_{D} \eta_{\veps_n(x) }(x-y) \lvert  u(x) - u(y) \rvert \rho_{2,s}(y) dy\right) \rho_{2,s}(x) dx, \quad u \in L^1(D),  \]
where
\[ \tilde{f}_{n,s}(x) :=  \frac{(\tilde{\veps}_n(x))^{d}}{  ( \mu_s( B(x, \veps_n(x)) )     )^{1+ 1/d}  }, \]
and
\[ d \mu_s(x):= \rho_{1,s}(x) dx.\]
It follows that for every fixed $s \in \N$,  for all large $n \in \N$ 
\[ F_n(v) \leq F_{n,s}(v) , \quad \forall v \in L^1(D). \]
Hence, from \eqref{fixedu} it follows that for every arbitrary $u \in C^\infty_c(\R^d)$ 
\[ \limsup_{n \rightarrow \infty} F_{n}(u) \leq \limsup_{n \rightarrow \infty} F_{n,s}(u) \leq \frac{\sigma_\eta}{\alpha_d^{1+1/d}} \int_{D} \frac{\rho_{2,s}^2(x)}{\rho_{1,s} ^{1+ 1/d}(x) } d \lvert Du \rvert(x).\]
Taking $s \rightarrow \infty$ in the above inequality we deduce that
\[  \limsup_{n \rightarrow \infty} F_{n}(u) \leq \frac{\sigma_\eta}{\alpha_d^{1+1/d}} TV(u; \rho^{1-1/d} ).  \]
Remark \eqref{density} and the density of $C_c^\infty(\R^d)$ functions with respect to $TV$ imply the desired result.


To establish the liminf inequality, it is enough to change the roles of $\rho_{1,s}$ and $\rho_{2,s}$ in the definition of the functional $F_{n,s}$ and use Lemma \ref{LemmaAux} to conclude that if $u_n \converges{L^1(D)} u $, 
then for every $s \in \N$
\[ \liminf_{n \rightarrow \infty} F_{n}(u_n) \geq \liminf_{n \rightarrow \infty} F_{n,s}(u_n) \geq \frac{\sigma_\eta}{\alpha_d^{1+1/d}} \int_{D} \frac{\rho_{1,s}^2(x)}{\rho_{2,s} ^{1+ 1/d}(x) } d \lvert Du \rvert(x).\]
Taking the limit as $s \rightarrow \infty$ on the right hand side of the above expression we obtain the desired result.
 
\end{proof}

We are ready to establish Theorem \ref{Main}.

\begin{proof}[Proof of liminf inequality in Theorem \ref{Main}:]
Let us consider  the cone $\mathcal{C}$, the family of rotations $\left\{ R_{x} \right\}_{x \in D}$, and the number $1>\zeta>0$ 
introduced in the proof of Proposition \ref{Lemma1}. For $0\leq \gamma_1< \gamma_2 < \zeta$ and $x \in D$ let $A(x;\gamma_1, \gamma_2)$ be the annulus $A(x; \gamma_1, \gamma_2):= B(x, \gamma_2)\setminus B(x , \gamma_1) $. Then, there exist constants $\mathcal{K}_1$ (only depending on $d$, and on the cone $\mathcal{C}$) and $\mathcal{K}_2$ (only depending on $d$), such that for every $0\leq \gamma_1< \gamma_2 < \zeta$,
\begin{equation}
 \mathcal{K}_1 \rho_{min}  (\gamma_2)^{d-1} \cdot (\gamma_2- \gamma_1   )   \leq  \nu\left( A(x; \gamma_1, \gamma_2) \cap (x+ R_x(\mathcal{C})) \right)   \leq  \nu\left( A(x; \gamma_1, \gamma_2)  \right) , \quad  \forall x \in D,
 \label{lowerbound}
 \end{equation}
and
\begin{equation}
 \nu\left( A(x; \gamma_1, \gamma_2)  \right) \leq \mathcal{K}_2 \rho_{max}  (\gamma_2)^{d-1} \cdot (\gamma_2- \gamma_1 ), \quad \forall x \in D. 
 \label{upperbound}
 \end{equation}

We work in a set with full probability for which the maps $\left\{ T_n \right\}_{n \in \N}$ from \eqref{Tn} exist. 
For every $x \in D$ we let $\veps_n(x)$ be the number for which
\[ \nu\left(  B( x, \veps_n(x)) \right)  = \bar{\veps}_n^d = \frac{k_n}{n}. \] 
On the other hand, we define $\dot{\veps}_n(x)$ and $\hat{\veps}_n(x)$ to be 
\[  \dot{\veps}_n(x) := \veps_n(x) - \frac{ \mathcal{K}_2\rho_{max}\delta_n}{\mathcal{K}_1\rho_{min}}, \]
\[ \hat{\veps}_n(x) :=  \dot{\veps}_n(x) -3\delta_n,\]
where $\delta_n$ is as in \eqref{Tn}. 

Notice that the assumption $n \gg k_n \gg \log(n)$ is equivalent to $1 \gg \overline{\veps}_n \gg \delta_n  $ where we recall $\overline{\veps}_n$ is defined in \eqref{hateps}. Combining this with \eqref{lowerbound} we deduce that

\begin{equation}
 \lim_{n \rightarrow \infty} \sup_{x\in D} \veps_n(x)  =0,  \quad  \lim_{n \rightarrow \infty}  \sup_{x \in D} \left \lvert\frac{\veps_n(x)}{\hat{\veps}_n(x)} -1 \right\rvert=0, \quad  \lim_{n \rightarrow \infty} \frac{\delta_n}{\inf_{x \in D} \hat{\veps}_n(x)}=0.   
 \label{Propshatveps}
 \end{equation}

Now, from \eqref{lowerbound} we see that for all large enough $n$, and for all $x \in D$,
\begin{align}
\begin{split}
 \nu(B(x, \dot{\veps}_n(x)) )  & =  \nu(B(x, \veps_n(x)) )  - \nu(A(x; \dot{\veps}_n(x), \veps_n(x))) 
\\&\leq \frac{k_n}{n} - \mathcal{K}_2 \rho_{max}  (\veps_n(x))^{d-1} \delta_n.  
\end{split}
\end{align}
Thus, 
\begin{align*}
\begin{split}
\nu_n(  B(x, \dot{\veps}_n(x) )  ) & =  \nu_n(  B(x, \dot{\veps}_n(x) )  )  - \nu(  B(x, \dot{\veps}_n(x) )  ) + \nu(  B(x, \dot{\veps}_n(x) )  ) 
\\&  \leq \nu(  B(x, \dot{\veps}_n(x) + \lVert Id - T_n \rVert_\infty )  )  - \nu(  B(x, \dot{\veps}_n(x) )  ) + \nu(  B(x, \dot{\veps}_n(x) )  )
\\ & \leq \mathcal{K}_2 \rho_{max}\veps_n(x)^{d-1} \lVert Id - T_n \rVert_\infty +  \frac{k_n}{n} -  \mathcal{K}_2 \rho_{max} (\veps_n(x))^{d-1} \cdot \delta_n
\\&\leq\mathcal{K}_2 \rho_{max}\veps_n(x)^{d-1} \delta_n +  \frac{k_n}{n} -  \mathcal{K}_2 \rho_{max} {\veps}_n(x)^{d-1} \delta_n
\\& = \frac{k_n}{n},
\end{split}
\end{align*}
where in the first inequality we have used the fact that $T_n$ is a transportation map between $\nu$ and $\nu_n$, and 
in the second inequality we have used \eqref{upperbound}.  Combining the previous inequality with the fact that $B(T_n(x) , \dot{\veps}_n(x) - \delta_n) \subseteq B(x, \dot{\veps}_n(x))$ we deduce that 
\[ \nu_n(B(T_n(x) , \hat{\veps}_n(x) -\delta_n )) \leq \frac{k_n}{n}.\]
In particular, if $y \in D$ is such that $T_n(y) \not \sim_{k_n} T_n(x)$, then from Remark \ref{Remknn} it follows that 
\[  \hat{\veps}_n(x) = \dot{\veps}_n(x)  - 3 \delta_n <  \lvert T_n(x) - T_n(y) \rvert  -2 \delta_n \leq  \lvert T_n(x) - T_n(y) \rvert- \lvert x- T_n(x)\rvert - \lvert y-T_n(y) \rvert \leq \lvert x-y \rvert.\]
We conclude that for all large enough $n$,  $J_{k_n}(T_n(x) , T_n(y)) =0 $ implies $\hat{\veps}_n(x) < \lvert x-y \rvert$. 
In particular, we deduce that for all large enough $n \in \N$,
\begin{equation}
J_{k_n}(T_n(x), T_n(y))  \geq \eta \left( \frac{\lvert  x-y  \rvert}{ \hat{\veps}_n(x)} \right), \quad \forall x,y \in D.
\label{RelJandEta}
\end{equation}
Summarizing, by contracting the radius $\veps_n(x)$ a little bit we obtained $\hat{\veps}_n(x)$ satisfying \eqref{Propshatveps} and \eqref{RelJandEta}.

%
%
%

Let us now consider a sequence of functions $\left\{ u_n \right\}_{n \in \N}$ with $u_n \in L^1(\nu_n)$ for which $u_n \converges{TL^1} u$ for some $u \in L^1(\nu)$.   
It follows from \eqref{RelJandEta} that for all large enough $n \in \N$,
\begin{align}
\begin{split}
GTV_{n, k_n}(u_n) &= \frac{1}{n^2 \overline{\veps}_n^{d+1}} \sum_{i,j} J_{k_n}(\x_i, \x_j) \lvert u_n(\x_i) - u_n (\x_j)\rvert 
\\ &=  \frac{1}{n^2 }  \sum_{i=1}^n \frac{1}{\overline{\veps}_n^{d+1}} \left( \sum_{j=1}^n J_{k_n}(\x_i, \x_j) \lvert u_n(\x_i) - u_n (\x_j)\rvert \right) 
\\&= \int_{D} \frac{1}{\overline{\veps}_n^{d+1}} \left( \int_{D} J_{k_n}(T_n(x), T_n(y)) \lvert u_n\circ T_n(x) - u_n\circ T_n(y)\rvert  d \nu(y) \right) d \nu(x)
\\&\geq  \int_{D} \frac{1}{\overline{\veps}_n^{d+1}} \left( \int_{D} \eta\left( \frac{\lvert x-y \rvert}{\hat{\veps}_n(x)} \right)  \lvert u_n\circ T_n(x) - u_n\circ T_n(y)\rvert  d \nu(y) \right) d \nu(x)
\\& =  \int_{D} \hat{f}_n(x)\left( \int_{D} \eta_{\hat{\veps}_n(x)}\left( \lvert x-y \rvert  \right)  \lvert u_n\circ T_n(x) - u_n\circ T_n(y)\rvert  d \nu(y) \right) d \nu(x) 
\\&= F_n(u_n\circ T_n),
\end{split}
\label{ineqaux0}
\end{align}
where for an arbitrary function $v \in L^1(\nu)$, $F_n(v)$ is defined as
\[ F_n(v):= \int_{D} \hat{f}_n(x) \left( \int_{D} \eta_{\hat{\veps}_n(x)}\left( \lvert x-y \rvert  \right)  \lvert v(x) - v(y)\rvert  d \nu(y) \right) d \nu(x), \]
and the function $\hat{f}_n$ is given by $ \hat{f}_n(x):= \frac{\hat{\veps}_n(x)^d}{( \nu(B(x, \veps_n(x))) )^{1+1/d}} $.
Since $u_n \converges{TL^1} u$ implies that $u_n \circ T_n \converges{L^1(\nu)} u$, it follows from Proposition \eqref{Lemma1} and \eqref{ineqaux0} that
\[   \liminf_{n \rightarrow \infty} GTV_{n, k_n}(u_n)  \geq \liminf_{n \rightarrow \infty} F_n(u_n\circ T_n)  \geq \frac{\sigma_\eta}{\alpha_d^{1+1/d}} TV(u; \rho^{1-1/d}),\]
which establishes the liminf inequality in Theorem \ref{Main}.
\end{proof}

\begin{proof}[Proof of the limsup inequality in Theorem \eqref{Main}:]
We work in a set with full probability for which the maps $\left\{ T_n \right\}_{n \in \N}$ from \eqref{Tn} exist. By an analogue construction to the one used in the proof of the liminf inequality (this time enlarging $\veps_n(x)$ instead of contracting it), 
we may construct a function $\hat{\veps}_n$ satisfying \eqref{Propshatveps} and 
\[  J_{k_n}(T_n(x) , T_n(y)) \leq \eta \left( \frac{\lvert x-y \rvert}{\hat{\veps}_n(x)} \right), \quad \forall x,y \in D. \]

Let $u \in C_c^\infty(\R^d)$ and for every $n \in \N$ let $u_n$ be the function in $L^1(\nu_n)$ defined by
\[ u_n(\x_i):= u(\x_i), \quad i=1, \dots, n.\] 
In other words, $u_n$ is the function $u$ restricted to the point cloud $X_n$. Then,

\begin{align}
\begin{split}
GTV_{n, k_n}(u_n) &= \frac{1}{n^2 \overline{\veps}_n^{d+1}} \sum_{i,j} J(\x_i, \x_j) \lvert u(\x_i) - u(\x_j)\rvert 
\\&= \int_{D} \frac{1}{\overline{\veps}_n^{d+1}} \left( \int_{D} J(T_n(x), T_n(y)) \lvert u\circ T_n(x) - u\circ T_n(y)\rvert  d \nu(y) \right) d \nu(x)
\\& \leq  \int_{D} \frac{1}{\overline{\veps}_n^{d+1}} \left( \int_{D} \eta\left( \frac{\lvert x-y \rvert}{\hat{\veps}_n(x)} \right)  \lvert u_n\circ T_n(x) - u_n\circ T_n(y)\rvert  d \nu(y) \right) d \nu(x)
\\& \leq  \int_{D} \hat{f}_n(x) \left( \int_{D} \eta_{\hat{\veps}_n(x)}\left( \lvert x-y \rvert  \right)  \lvert u(x) - u(y)\rvert  d \nu(y) \right) d \nu(x)  + \mathcal{I}_n + \mathcal{II}_{n}
\\&= F_n(u)  +   \mathcal{I}_n + \mathcal{II}_{n},
\end{split}
\label{ineqaux1}
\end{align}
where 
\[ \mathcal{I}_n:=  \int_{D} \hat{f}_n(x) \left( \int_{D} \eta_{\hat{\veps}_n(x)}\left( \lvert x-y \rvert  \right)  \lvert u(x) - u(T_n(x))\rvert  d \nu(y) \right) d \nu(x),   \]
and
\[ \mathcal{II}_n:=  \int_{D} \hat{f}_n(x) \left( \int_{D} \eta_{\hat{\veps}_n(x)}\left( \lvert x-y \rvert  \right)  \lvert u(y) - u(T_n(y))\rvert  d \nu(y) \right) d \nu(x),   \]
But notice that 
\[ \mathcal{I}_n , \mathcal{II}_{n} \leq  \frac{ C \lVert \nabla u \rVert_{\infty} \lVert Id - T_n \rVert_{\infty}}{\inf_{x \in D} \hat{\veps}_n(x)  },  \]
for some constant $C$. Hence, $\lim_{n \rightarrow \infty} \mathcal{I}_n  = \lim_{n \rightarrow \infty} \mathcal{II}_n =0$. It follows from the proof of Proposition \eqref{Lemma1} that
\[  \limsup_{n \rightarrow \infty } GTV_{n, k_n}(u_n)  \leq \limsup_{n \rightarrow \infty} F_n(u)  \leq \frac{\sigma_\eta}{\alpha_d^{1+1/d}} TV(u; \rho^{1-1/d}). \]
Using the density of $C_c^\infty(\R^d)$ with respect to $TV$, we may find a recovery sequence $\{ u_n \}_{n \in \N}$ with $u_n \in L^1(\nu_n)$ for every $u \in L^1(D)$.  Finally, 
if $u \in L^1(D)$ is of the form $u= \mathds{1}_A$ for some measurable set $A \subseteq D$, we can actually choose this recovery sequence to consist of indicator functions, 
as it follows from the fact that the energies $GTV_{n, k_n}$ satisfy a coarea formula (see the proof of Corollary 1.3  in \cite{GarciaTrillos2015}). This establishes the last statement in Theorem \ref{Main}.

\end{proof}

\subsection{Proof of compactness}
\begin{proof}[Proof of Theorem \ref{Compac}] The compactness in Theorem \ref{Compac} follows directly from the compactness result in \cite{GarciaTrillos2015}. Indeed, consider $\hat{\veps}_n$ as defined in the proof of the liminf inequality of Theorem \ref{Main} and define
\[  \veps_n^l := \inf_{x \in D} \hat{\veps}_n(x) - 2 \delta_n.\]
Then it follows that for all large enough $n$, 
\[  \frac{\veps_n^l}{\overline{\veps}_n} \geq c >0,\]
for some constant $c>0$. Moreover, for every $i,j$ we have
\begin{equation}
J_{k_n}(\x_i, \x_j) \geq \eta\left(  \frac{\lvert  \x_i - \x_j\rvert}{\veps_n^l} \right).
\label{IneqForCompact}
\end{equation}

Let $\{ u_n \}_{n \in \N}$ be a sequence with $u_n \in L^1(\nu_n)$ satisfying
\[  \sup_{n \in \N} \lVert u_n \rVert_{L^1(\nu_n)}  < \infty, \]
and
\[  \sup_{n \in \N} GTV_{n, k_n} (u_n)   < \infty. \]
It follows from inequality \eqref{IneqForCompact} that
\[   GTV_{n, k_n}(u_n) \geq \frac{c^d}{n^2 \veps_n^l} \sum_{i,j} \eta_{\veps_n^l}(\lvert \x_i - \x_j \rvert) \lvert u_n(\x_i) - u_n (\x_j)\rvert =: c^d GTV_{n , \veps_n^l}(u_n). \]
Therefore,
\[  \sup_{n\in \N} GTV_{n, \veps_n^l}(u_n) <\infty .\]
The pre-compactness of $\{ u_n\}_{n \in \N}$ in $TL^1$ follows from Theorem 1.2 in \cite{GarciaTrillos2015}.
\end{proof}

\appendix
\section{}

In this appendix we prove the claim we made in Step 3 in the proof of Lemma \ref{LemmaAux}. Let us start by noting that because $A$ has finite perimeter, we may talk about its \textit{reduced boundary} $\partial ^* A$ (see \cite{Maggi}). In particular, for every measurable set $S \subseteq D$,
\[ \lvert D u \rvert(S) = \mathcal{H}^{d-1}( \partial^* A \cap S), \]
where $\mathcal{H}^{d-1}$ stands for the $(d-1)$-dimensional Hausdorff measure. 

De Giorgi's structure theorem (see Theorem 15.9 in \cite{Maggi}) implies that there exist countably many
$C^1$-hypersurfaces $\{ M_i \}_{i \in \N}$, compact sets $K_i \subseteq M_i$, and a $\mathcal{H}^{d-1}$-null set $F$ such that
\[ \partial ^* A = F \cup \bigcup_{i \in \N} K_i.\]

Fix $j \in \N$. From the fact that the sets $K_i$ are subsets of $C^1$-hypersurfaces, it is straightforward to see that one can find a  sequence $\{ \tilde{\mathcal{F}}_l^j \}_{l \in \N}$ of families of closed balls contained in $D$, satisfying (1), (2), (3) as above and such that
\[  \lim_{l\rightarrow \infty}   \mathcal{H}^{d-1}\left(    \bigcup_{B \in \tilde{\mathcal{F}}_l^j } B   \cap \bigcup_{i=1}^j K_i \right) = \mathcal{H}^{d-1} \left(D\cap  \bigcup_{i=1}^j K_i \right).  \]
Still working with $j$ fixed, we now proceed to modify the balls in the families $\tilde {\mathcal{F}}_{l}^j$ to obtain balls that also satisfy (4). 
Indeed, we consider the family $\mathcal{F}_l^j$ of closed balls obtained from the balls in $\tilde{\mathcal{F}}_{l}^j$ by adding $\delta_l$ units to their original radius. The number $\delta_l$ can be chosen in such a way 
that $0<\delta_l < \frac{1}{l}$, the balls in $\mathcal{F}_l^j$ are all contained in $D$, are pairwise disjoint and satisfy property (4). This can be done because $\partial^* A \cap D$ 
has finite $\mathcal{H}^{d-1}$-measure and hence its intersection with the boundary of arbitrary balls centered at a fixed point, can only have non-zero $\mathcal{H}^{d-1}$-measure for at most countably many of such balls.

A sequence of families  $ \{ \mathcal{F}_l \}_{l \in \N}$ of balls with the desired properties can then be obtained from the families $\{ \mathcal{F}_l^j \}_{l \in \N}$ for every $j \in \N$, by using a diagonal argument knowing that 
\[  \lim_{j\rightarrow \infty} \mathcal{H}^{d-1} \left(D\cap  \bigcup_{i=1}^j K_i \right) = \mathcal{H}^{d-1} \left(D\cap  \partial^* A \right).   \]

\nc

\bibliography{kNNBIB}
\bibliographystyle{siam}

\Addresses

\end{document}